\numberwithin{equation}{section}
\newtheorem{theorem}{Theorem}[section]
\newtheorem{proposition}[theorem]{Proposition}
\newtheorem{lemma}[theorem]{Lemma}
\newtheorem{corollary}[theorem]{Corollary}
\newtheorem{remark}[theorem]{Remark}
\def\P{\mathbf{P}}
\def\uno{\text{\bf 1}}
\def\lcm{\text{\rm lcm}}
\begin{document}

\title{Equidistribution and coprimality}
 \author{Jos\'{e} L. Fern\'{a}ndez and  Pablo Fern\'{a}ndez}

 \date{\today}

 \renewcommand{\thefootnote}{\fnsymbol{footnote}}
 \footnotetext{\noindent\emph{2010 Mathematics Subject Classification}: 11N37, 11K36.
 }
\footnotetext{\noindent\emph{Keywords}: Equidistribution, coprimality of $r$-tuples of integers, special totient points, discrepancies.
}
\renewcommand{\thefootnote}{\arabic{footnote}}

 \maketitle

 \begin{abstract}
{This paper is devoted to the study of equidistributional properties of \textit{totient points} in  $\mathbb{N}^r$, that is, of coprime $r$-tuples of integers, with particular emphasis on some relevant sets of totient points fulfilling  extra divisibility or coprimality conditions, or lying on arithmetic progressions.}\end{abstract}

\section{Introduction}
A subset $\mathcal{S}$ of $\mathbb{N}^r$ is termed \textit{equidistributed} if for some constant $D_\mathcal{S}$ and  for any function $f\in C([0,1]^r)$,
\begin{equation}
\label{eq:def_equidis}
\lim_{n\to\infty} \ \frac{1}{n^r} \sum_{\mathbf{x}\le n, \, \mathbf{x}\in \mathcal{S}} f\Big(\frac{x_1}{n}, \dots, \frac{x_r}{n}\Big)= D_\mathcal{S}\, \int_{[0,1]^r} f(u_1,\dots, u_r)\, du_1\cdots du_r\, .
\end{equation}
For $\mathbf{x}\in \mathbb{N}^r$, the notation $\mathbf{x}\le n$ means simply that each coordinate $x_i$ of $\mathbf{x}$ is at most $n$, while, for $\mathbf{x}\in \mathbb{N}^r$ and $\boldsymbol{\beta} \in [0,\infty)^r$, we write $\mathbf{x} \le
\boldsymbol{\beta}$ if $x_j \le \beta_j$ for each $j=1, \ldots, n$.

If $\mathcal{S}$ is equidistributed, the constant $D_{\mathcal{S}}$ is the asymptotic density of $\mathcal{S}$:
\begin{equation}
\label{eq:def_density}
D_{\mathcal{S}}=\lim_{n\to\infty} \ \frac{1}{n^r} \#\{\mathbf{x}\le n, \, \mathbf{x}\in \mathcal{S}\}\, .
\end{equation}


Actually,  condition \eqref{eq:def_equidis} is equivalent to
\begin{equation}
\label{eq:def_equidis_alt}
\lim_{n\to\infty} \ \frac{1}{n^r} \#\{\mathbf{x}\le n\boldsymbol{\alpha}, \, \mathbf{x}\in \mathcal{S}\}=D_{\mathcal{S}}\, |\boldsymbol{\alpha}|\,,
\end{equation}
for each $\boldsymbol{\alpha} \in [0,1]^r$. We use  $|\boldsymbol{\alpha}|$ to denote the product $\alpha_1 \alpha_2 \cdots \alpha_r$.

For a subset $\mathcal{S}$ of $\mathbb{N}^r$, its discrepancy function $\Delta_{\mathcal{S}}: \mathbb{N}\to [0,1]$ is defined as
$$
\Delta_{\mathcal{S}}(n)=\sup_{\boldsymbol{\alpha} \in[0,1]^r}\bigg| \frac{\#\{\mathbf{x}\le n\boldsymbol{\alpha}, \, \mathbf{x}\in \mathcal{S}\}}{\#\{\mathbf{x}\le n, \, \mathbf{x}\in \mathcal{S}\}}-|\boldsymbol{\alpha}|\bigg|\, .
$$
For an equidistributed set $\mathcal{S}$, it turns out that $\Delta_{\mathcal{S}}(n)\to 0$ as $n\to\infty$; in other terms, condition \eqref{eq:def_equidis_alt} holds uniformly in $\boldsymbol{\alpha}$.

\smallskip

A classical theorem of Dirichlet asserts that the probability that two random integers are coprime is $1/\zeta(2)$, that is,
$$
\lim_{n\to\infty}\ \frac{1}{n^2} \#\big\{(i,j): 1\le i,j\le n;\ \gcd(i,j)=1\big\}=\frac{1}{\zeta(2)}
$$
(see, for instance, Theorem 332 in \cite{HW}). Indeed, the set of \textit{totient} points, those points in $\mathbb{N}^2$ whose coordinates are coprime, are equidistributed; a result which can be traced back to an observation of D.\,N. Lehmer, see Chapter~IV of~\cite{Lehmer}.

\smallskip

In this paper we will study equidistributional properties (in particular,  asymptotic density and bounds on the discrepancy function) of relevant sets  of ``coprime'' \mbox{$r$-tuples} in $\mathbb{N}^r$, where, for $r \ge 3$,  ``coprime'' could mean just ``mutually coprime'' or, more demandingly, ``pairwise coprime''.  These results  shall prove useful elsewhere (see~\cite{FF1}). It should be pointed out that because of the nature of the sets $\mathcal{S}$ under study in this paper, a variation of the proof of \eqref{eq:def_density} usually provides a proof of \eqref{eq:def_equidis_alt}.

For the whole set $\mathcal{S}$ of points in $\mathbb{N}^r$ with mutually coprime or pairwise coprime coordinates, these results are (essentially) known.
For instance, if $\mathcal{S}$ is the set of mutually coprime $r$-tuples, then $D_{\mathcal{S}}=1/\zeta(r)$, while for the set of pairwise coprime $r$-tuples, $D_{\mathcal{S}}=T_r$, where the constant $T_r$ is defined in~\eqref{eq:proportion of PC}; see Section \ref{sub:prelims}.

\smallskip

As a sample of our results,  consider the following special totient points: Fix a reference $r$-tuple $\mathbf{a}=(a_1, \dots, a_r)$ such that the coordinates $a_j$ are pairwise coprime. The set $\mathcal{S}$ of interest comprises the $r$-tuples $\mathbf{x}=(x_1,\dots, x_r)$ of integers with mutually coprime coordinates and such that each $x_i$ is a multiple of the corresponding $a_i$, for $i=1,\dots, r$. As we will se in~Section \ref{subsection:coprimality plus divisibility}, $\mathcal{S}$ is equidistributed with constant
\begin{align*}
\frac{1}{\zeta(r)} \ \frac{\varphi_{r-1}(|\mathbf{a}|)}{\varphi_{r}(|\mathbf{a}|)}
\end{align*}
(see the definition of the Jordan totient function $\varphi_r$ in~\eqref{eq:Jordan function}). In the case $r=2$, for $\mathbf{a}=(a,b)$, with $\gcd(a,b)=1$, one would perhaps na\"{\i}vely expect the proportion of totient points on the lattice $a\mathbb{N}\oplus b\mathbb{N}$ to be $\frac{1}{\zeta(2)}\frac{1}{ab}$; while, in fact,  the above expression reduces to
$$
\frac{1}{\zeta(2)}\, \frac{1}{ab}\, \bigg[\frac{1}{\prod_{p|ab} (1+1/p)}\bigg]\,.
$$
Notice the correction factor within the brackets. This result was already obtained, for the case $\mathbf{a}=(a,1)$, by D.\,N. Lehmer in 1900 (see Theorem I, Chapter IV, in~\cite{Lehmer})\footnote{He applied this estimate to the problem of counting the number of right triangles whose sides are mutually coprime integers, and such that the hypotenuse is less or equal than $N$. The problem was revisited by his son D.\,H. Lehmer in \cite{Lehmer-son}. By the way, D.\,N. Lehmer also consider some  mixed divisibility/coprimality conditions.}.

\subsection{Preliminaries, notation and background results}\label{sub:prelims} For $r$-tuples of integers, there are several notions of ``coprimality''. The integers $a_1,\dots, a_r$ are \textit{mutually coprime} if $\gcd(a_1,\dots,a_r)=1$; and they are \textit{pairwise coprime} if $\gcd(a_i, a_j)=1$ for each $i\ne j$, $1\le i,j\le r$. We will refer to this by simply writing $(a_1,\dots,a_r)\in \mathcal{C}$ and $(a_1,\dots, a_r)\in \mathcal{PC}$, respectively. For $\mathbf{a}=(a_1, \ldots, a_r)$, we abbreviate $\gcd(a_1,\dots,a_r)=\gcd(\mathbf{a})$.

 Another notion of coprimality, intermediate between mutual and pairwise coprimality, is the following: for fixed $2\le k\le r$, we will say that the integers $(a_1,\dots, a_r)$ are \textit{$k$-wise relatively prime} (or simply $k$-coprime, or $k\mathcal{C}$) if \textit{any}~$k$ of them are relatively prime. That is, if $\gcd(a_{i_1},\dots, a_{i_k})=1$ for any set of $k$ indexes $1\le i_1<\dots< i_k\le r$, or alternatively, if each prime $p$ divides at most $k-1$ of them. The case $k=2$ is pairwise coprimality, while $k=r$ corresponds to mutual coprimality.

\medskip

Throughout, we will use the following probabilistic setting: for any given integer $n \ge 2$, denote by $X^{(n)}_1,
X^{(n)}_2, \ldots$ a sequence of independent random variables
uniformly distributed in $\{1, 2, \ldots, n\}$  and defined in a
certain given probability space endowed with a probability~$\P$.

Fix $r\ge 2$. Concerning mutual coprimality, we have
\begin{equation}
\label{eq:proportion of C}
\lim_{n \to \infty}\P\big(\big(X^{(n)}_1, \ldots, X^{(n)}_r\big) \in\mathcal{C}\big)=\lim_{n \to \infty}\P\big(\gcd\big(X^{(n)}_1, \ldots, X^{(n)}_r\big) =1 \big)=\frac{1}{\zeta(r)}\, ,
\end{equation}
that is, the asymptotic proportion of mutually coprime $r$-tuples of integers is $1/\zeta(r)$. The case $r=2$ is Dirichlet's result. The extension to $r>2$ can be traced back to E.~Ces\`{a}ro (\cite{Ce3}, page~293); see also, for instance, \cite{Ch}, \cite{HS} and \cite{Ny}.

\smallskip
For pairwise coprimality, we have
\begin{equation}\label{eq:proportion of PC}
\lim_{n \to \infty}\P\big(\big(X^{(n)}_1, \ldots, X^{(n)}_r\big) \in \mathcal{PC}\big)
=\prod_{p} \Big(\Big(1-\frac{1}{p}\Big)^{r}+\frac{r}{p}\Big(1-\frac{1}{p}\Big)^{r-1}\Big):=T_r.
\end{equation}
This fact was proved by L. Toth (\cite{To2004}) and by J.~Cai and E.~Bach (\cite{CB2001}).
For $r=2$, mutual and pairwise coprimality coincide ($T_2={1}/{\zeta(2)}$).

Recently, J. Hu (see Corollary 2 in \cite{Hu2}) has proved that
\begin{equation}\label{eq:proportion of kC}
\lim_{n \to \infty}\P\big(\big(X^{(n)}_1, \ldots, X^{(n)}_r\big) \in k\mathcal{C}\big)
=\prod_{p} \P(\textsc{bin}(r,1/p)\le k-1)
\end{equation}
using a recursive scheme related to the one in \cite{To2004}. See Section \ref{subsection:primality results} for an alternative proof.

\medskip

For each $r\ge 1$, the \textit{$r$-Jordan totient function} is given by
\begin{equation}
\label{eq:Jordan function}
\varphi_r(1)=1, \quad \varphi_r(a)=a^r \, \prod_{p |a} \Big(1-\frac{1}{p^r}\Big)\quad\text{for $a>1$}\, ;
\end{equation}
notice  that $\varphi_1(a)=\varphi(a)$,  Euler's $\varphi$ function. For each integer~$s$, the
function $\Psi_s$ is defined~as
\begin{equation}
\label{eq:Psi function}
\Psi_s(1)=1, \quad \Psi_s(a)=a\, \prod_{p |a} \Big(1+\frac{s}{p}\Big) \quad\text{for $a>1$}. 
\end{equation}
Observe  that $\Psi_0$ is the identity function, while $\Psi_{-1}\equiv \varphi$. In this paper, we just need $s\ge 1$, the case $s=1$ being the Dedekind Psi function. Observe that, for prime $p$ and positive integer $n$, $\Psi_s(p^n)=p^n\,(1+{s}/{p}).$

We shall also use the following fact: for any arithmetical function $F$,
\begin{equation}
\label{eq:use of mu}
\sum_{\mathbf{x}\le n, \,\gcd(\mathbf{x})=1} F(\mathbf{x})=\sum_{k=1}^n \mu(k) \sum_{\mathbf{x}\le n, \,k|\mathbf{x}} F(\mathbf{x})
=\sum_{k=1}^n \mu(k) \sum_{\mathbf{y}\le n/k} F(k\mathbf{y}),
\end{equation}
thanks to the properties of the M\"{o}bius function. Notice that on the right-hand side no coprimality restriction appears.

Finally, we shall denote the ordered sequence of primes by $p_1,p_2,\dots$

\subsubsection*{Organization of the paper}
Section \ref{section:proportion for coprimes} revisits  the proofs of the basic results about the asymptotic proportion of coprime $r$-tuples. In Section \ref{section:coprimality plus}, we will study equidistributional properties of several variations on coprimality with some extra conditions. Some  bounds for discrepancy functions will be obtained in Section~\ref{section:equidistribution for C and PC}.

\section{Asymptotic density of coprime $r$-tuples}\label{section:proportion for coprimes}

In this section we shall revisit some known results about  the asymptotic density of totient points following the approach in Cai-Bach, \cite{CB2001}. We shall recast and formalize its ingredients a bit, so that it could be applied in other contexts of interest.

\subsection{The Cai--Bach approach}\label{subsection:atomic lemma}
Fix integers $N\ge 1$ and $r\ge 2$.
Consider a matrix $M$ of dimensions $N\times r$, with entries $m_{ij}\in\{0,1\}$: the rows of $M$ are labeled with the primes $p_1,\dots, p_N$.


\smallskip
Given a random sample $\mathbf{X}^{(n)}=(X_1^{(n)},\dots,X_r^{(n)})$ of length~$r$, denote by $\mathbf{M}^{(n)}$ the associated $N\times r$ (random) matrix encoding the divisibility properties of the sample: the entry $(i,j)$ of the matrix $\mathbf{M}^{(n)}$ will be 1 if the prime $p_i$ divides the coordinate~$X_j^{(n)}$, and 0 otherwise.


\smallskip

For $i=1,\dots, N$ and $j=1,\dots, r$, denote by $I_{ij}$ a collection of independent Bernoulli random variables with success probability  $1/p_i$. Let $\mathbf{I}$ denote the $N \times r$ matrix whose entries are the $I_{ij}$. Observe that
$$
\P\big(\mathbf{I}=M\big)=\prod_{i=1}^N \prod_{j=1}^r \P(I_{ij}=m_{ij})\, .
$$

\begin{lemma}
\label{lemma:Cai-Bach atomizacion}Fix $N\ge 1$ and $r\ge 2$, and a matrix $M=(m_{ij})$ of dimensions $N\times r$, with entries $0$ or $1$. Then,
\begin{equation} \label{eq:Cai-Bach atomizacion}
\lim_{n\to\infty}\ \P\big(\mathbf{M}^{(n)}=M\big)=\P\big(\mathbf{I}=M\big)
\end{equation}
\end{lemma}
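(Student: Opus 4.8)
The event $\{\mathbf{M}^{(n)} = M\}$ is precisely the event that for every $(i,j)$, $p_i \mid X_j^{(n)}$ exactly when $m_{ij} = 1$. Since the coordinates $X_j^{(n)}$ are independent, this probability factors over the columns $j = 1, \dots, r$, and within each column the condition is a simultaneous divisibility/non-divisibility requirement on a single random variable $X_j^{(n)}$ uniform on $\{1,\dots,n\}$ with respect to the distinct primes $p_1, \dots, p_N$. So the whole problem reduces to a one-variable statement: if $X^{(n)}$ is uniform on $\{1,\dots,n\}$, $P \subseteq \{p_1,\dots,p_N\}$ is the set of primes that must divide it and $Q = \{p_1,\dots,p_N\}\setminus P$ the set that must not, then
$$
\lim_{n\to\infty}\P\Big(\textstyle\bigcap_{p\in P}\{p\mid X^{(n)}\}\cap\bigcap_{q\in Q}\{q\nmid X^{(n)}\}\Big)=\prod_{p\in P}\frac1p\,\prod_{q\in Q}\Big(1-\frac1q\Big).
$$
Multiplying these one-variable limits over $j=1,\dots,r$ then gives exactly $\prod_{i,j}\P(I_{ij}=m_{ij})=\P(\mathbf{I}=M)$, which is the claim.

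First I would reduce the one-variable statement to a counting estimate. Let $d = \prod_{p\in P} p$ be the product of the "must-divide" primes. The event inside the probability is: $d \mid X^{(n)}$ and $q \nmid X^{(n)}$ for each $q \in Q$. Writing $X^{(n)} = d\,Y$, this becomes: $Y \le n/d$ and $q \nmid Y$ for each $q\in Q$ (here we use that the $q \in Q$ are primes distinct from those dividing $d$, so $q\mid dY \iff q\mid Y$). By inclusion–exclusion over subsets of $Q$,
$$
\#\{m\le n : d\mid m,\ q\nmid m\ \forall q\in Q\}=\sum_{S\subseteq Q}(-1)^{|S|}\Big\lfloor\frac{n}{d\prod_{q\in S}q}\Big\rfloor.
$$
Dividing by $n$ and using $\lfloor n/t\rfloor = n/t + O(1)$ with the number of terms $2^{|Q|}$ fixed (independent of $n$), the limit is $\sum_{S\subseteq Q}(-1)^{|S|}\frac{1}{d\prod_{q\in S}q} = \frac1d\prod_{q\in Q}\big(1-\frac1q\big)$, which is the desired value. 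This is entirely elementary.

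Then I would assemble the pieces: for fixed $j$, let $P_j=\{p_i : m_{ij}=1\}$ and $Q_j=\{p_i: m_{ij}=0\}$; the one-variable result gives $\lim_{n\to\infty}\P(\text{column }j\text{ of }\mathbf{M}^{(n)}\text{ equals column }j\text{ of }M)=\prod_{p\in P_j}\frac1p\prod_{q\in Q_j}(1-\frac1q)$. By independence of $X_1^{(n)},\dots,X_r^{(n)}$, the joint probability $\P(\mathbf{M}^{(n)}=M)$ is the product over $j$ of these column probabilities, and passing to the limit (a finite product) yields $\prod_{j=1}^r\prod_{p\in P_j}\frac1p\prod_{q\in Q_j}(1-\frac1q)=\prod_{i=1}^N\prod_{j=1}^r\P(I_{ij}=m_{ij})=\P(\mathbf{I}=M)$.

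There is no serious obstacle here; the lemma is essentially bookkeeping around the classical fact that divisibility by fixed distinct primes becomes asymptotically independent for a uniform integer. The only point requiring a little care is making sure the inclusion–exclusion / floor-function error terms are uniform — but since $N$ and $r$ are fixed before $n\to\infty$, the number of error terms is a constant and each is $O(1)$, so after dividing by $n$ everything vanishes. One could also phrase the column computation directly via the Chinese Remainder Theorem (counting residues mod $\prod_{i=1}^N p_i$), which makes the independence across the distinct primes transparent; either route is routine.
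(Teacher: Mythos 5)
Your proof is correct and follows essentially the same route as the paper's: an inclusion--exclusion count with floor functions over the ``must-not-divide'' primes (the paper's $\widetilde{s}$ is your $d$), the observation that the finitely many $O(1)$ floor errors vanish after dividing by $n$, and then factorization over the coordinates by independence. No substantive difference.
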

This lemma is just a formulation of the asymptotic independence of divisibility by different primes.
\begin{proof}[Proof of Lemma {\upshape\ref{lemma:Cai-Bach atomizacion}}]
We just analyze the first coordinate, $X_1^{(n)}$. Say that in the first column of $M$ there are $a$ ones (corresponding to the rows indexed with primes $s_1,\dots, s_a$) and $b$ zeros (corresponding to the primes $t_1,\dots, t_b$). Write $\widetilde{s}=s_1\cdots s_a$ (or $\widetilde{s}=1$ if $a=0$).

\smallskip

For $p$ prime, consider the set
$$
H_p=\{1\le k\le n : p| k\},
$$
and let $\overline{H}_p$ be its complement in $\{1,\dots,n\}$. Observe that
$
|H_p|=\lfloor n/p\rfloor
$ and $|\overline{H}_p|=n-\lfloor n/p\rfloor$. For primes $p,q$, using that $\big\lfloor \frac{\lfloor x\rfloor}{n}\big\rfloor=
\big\lfloor \frac{x}{n}\big\rfloor$ for $x\ge 0$ and $n\in\mathbb{N}$, one readily checks that
$$
|H_p\cap H_q|=\Big\lfloor \frac{n}{pq}\Big\rfloor,\quad
|H_p\cap \overline{H}_q|=\Big\lfloor \frac{n}{p}\Big\rfloor-\Big\lfloor \frac{n}{pq}\Big\rfloor,\quad
|\overline{H}_p\cap \overline{H}_q|=n-\Big\lfloor \frac{n}{p}\Big\rfloor-\Big\lfloor \frac{n}{q}\Big\rfloor+\Big\lfloor \frac{n}{pq}\Big\rfloor.
$$
In general, in  our case,
$$
\big|H_{s_1}\cap\cdots \cap H_{s_a}\cap \overline{H}_{t_1}\cap \cdots \cap \overline{H}_{t_b}\big|=
\Big\lfloor \frac{n}{\widetilde{s}}\Big\rfloor
-\sum_{i=1}^b \Big\lfloor \frac{n}{\widetilde{s}\, t_i}\Big\rfloor
+\sum_{i<j} \Big\lfloor \frac{n}{\widetilde{s}\, t_i\, t_j}\Big\rfloor- \cdots
$$
This means that
\begin{align*}
\P\big(s_1, \dots, s_a\,|\, X_1^{(n)}, &\text{ and } t_1,\dots, t_b\, \nmid\, X_1^{(n)}\big)=\frac{1}{n}\Big(
\Big\lfloor \frac{n}{\widetilde{s}} \Big\rfloor
-\sum_{i=1}^b \Big\lfloor \frac{n}{\widetilde{s}\, t_i}\Big\rfloor
+\sum_{i<j} \Big\lfloor \frac{n}{\widetilde{s}\, t_i\, t_j}\Big\rfloor- \cdots\Big)
\\
&\xrightarrow{n\to\infty}\
\frac{1}{\widetilde{s}}\
\Big(1
-\sum_{i=1}^b \frac{1}{t_i}
+\sum_{i<j} \frac{1}{t_i\, t_j}- \cdots\Big)=\frac{1}{\widetilde{s}}\ \prod_{j=1}^b \Big(1-\frac{1}{t_j}\Big)
\\
&=\prod_{i=1}^a \frac{1}{s_i}\ \prod_{j=1}^b \Big(1-\frac{1}{t_j}\Big)=\prod_{i=1}^N \P(I_{i1}=\varepsilon_{i1}).
\end{align*}
The independence of the coordinates $X_j^{(n)}$ gives the result.\end{proof}

\begin{corollary}[of Proof]With the hypothesis of Lemma {\upshape\ref{lemma:Cai-Bach atomizacion}}, if ${\boldsymbol\alpha}\in [0,1]^r$, then
\begin{equation} \label{eq:Cai-Bach atomizacion alpha}
\lim_{n\to\infty}\ \P\big(\mathbf{X}^{(n)}\le n{\boldsymbol\alpha}\,,\,\mathbf{M}^{(n)}=M\big)=|\boldsymbol\alpha|\,\P\big(\mathbf{I}=M\big)
\end{equation}
\end{corollary}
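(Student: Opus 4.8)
The plan is to re-run the counting argument from the proof of Lemma~\ref{lemma:Cai-Bach atomizacion}, now with each coordinate confined to an initial segment of $\{1,\dots,n\}$. First I would invoke independence of $X_1^{(n)},\dots,X_r^{(n)}$ to factor the event $\{\mathbf{X}^{(n)}\le n\boldsymbol\alpha,\ \mathbf{M}^{(n)}=M\}$ as the intersection, over $j=1,\dots,r$, of the mutually independent events ``$X_j^{(n)}\le n\alpha_j$ and the $j$-th column of $\mathbf{M}^{(n)}$ agrees with that of $M$''. So it suffices to prove the one-coordinate statement $\P\big(X_1^{(n)}\le n\alpha_1,\ \text{first column agrees}\big)\to \alpha_1\,\prod_{i=1}^N\P(I_{i1}=m_{i1})$, and then multiply the $r$ resulting limits.

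For that single-coordinate claim I would keep all the notation of the Lemma's proof ($s_1,\dots,s_a$ the primes forced to divide $X_1^{(n)}$, $t_1,\dots,t_b$ the primes forbidden, $\widetilde s=s_1\cdots s_a$) and simply replace the ambient set $\{1,\dots,n\}$ by $\{1,\dots,m\}$ with $m=\lfloor n\alpha_1\rfloor$; this is legitimate because $X_1^{(n)}$ is integer-valued, so ``$X_1^{(n)}\le n\alpha_1$'' is the same as ``$X_1^{(n)}\le m$''. The identical inclusion--exclusion over $H_p\cap\{1,\dots,m\}$, together with $\lfloor\lfloor x\rfloor/d\rfloor=\lfloor x/d\rfloor$, gives $\#\{1\le k\le m:\ s_1,\dots,s_a\mid k,\ t_1,\dots,t_b\nmid k\}=\lfloor m/\widetilde s\rfloor-\sum_i\lfloor m/(\widetilde s\,t_i)\rfloor+\sum_{i<j}\lfloor m/(\widetilde s\,t_it_j)\rfloor-\cdots$, and dividing by $n$ gives the probability in question. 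Since $m/n\to\alpha_1$, each term satisfies $\lfloor m/d\rfloor/n\to\alpha_1/d$, so the whole sum divided by $n$ tends to $\alpha_1\,\frac1{\widetilde s}\prod_{j}(1-1/t_j)=\alpha_1\prod_{i=1}^a(1/s_i)\prod_{j=1}^b(1-1/t_j)=\alpha_1\prod_{i=1}^N\P(I_{i1}=m_{i1})$ --- exactly the old limit scaled by $\alpha_1$.

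Multiplying the $r$ coordinate-wise limits then yields $\big(\prod_{j=1}^r\alpha_j\big)\prod_{i,j}\P(I_{ij}=m_{ij})=|\boldsymbol\alpha|\,\P(\mathbf{I}=M)$, which is \eqref{eq:Cai-Bach atomizacion alpha}. I do not expect a real obstacle here: the only point that deserves a line of care is that the inclusion--exclusion has a \emph{fixed} finite number of terms with \emph{fixed} denominators --- both depending only on $M$, not on $n$ --- so each $\lfloor m/d\rfloor=m/d+O(1)$ and those $O(1)$ errors vanish after dividing by $n$ and letting $n\to\infty$; there is no uniformity subtlety lurking. (A slicker alternative would be to deduce the corollary directly from Lemma~\ref{lemma:Cai-Bach atomizacion} by intersecting with $\{\mathbf{X}^{(n)}\le n\boldsymbol\alpha\}$ and a routine approximation, but simply redoing the count is the most transparent route.)
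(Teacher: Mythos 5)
Your proof is correct and is essentially the paper's own argument: the paper also proves the corollary by rerunning the counting from Lemma~\ref{lemma:Cai-Bach atomizacion} with $H_p$ and $\overline{H}_p$ redefined inside $\{1\le k\le n\alpha\}$, so that each floor term picks up the factor $\alpha_j$ in the limit. Your added remarks on the finitely many fixed denominators and on factoring over coordinates by independence are just the details the paper leaves implicit.
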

\begin{proof}
Follow the same proof but, for $0\le \alpha\le 1$, for prime $p$ define $H_p=\{1\le k\le n\alpha : p| k\}$ and
$\overline{H}_p=\{1\le k\le n\alpha : p\nmid k\}$.
\end{proof}

Some particular cases of Lemma \ref{lemma:Cai-Bach atomizacion} are in order.

\smallskip
a) Fix sets $A_1,\dots,A_N\subset \{1,\dots,r\}$ of sizes $h_1,\dots, h_N$, respectively, and consider the matrix~$M$ with $m_{ij}=1$ if $j\in A_i$ (and 0 otherwise). Then
\begin{align}
\nonumber
\P\big(\mathbf{X}^{(n)}\colon  p_i| X_j^{(n)} &\,\text{ if } j\in A_i\text{ and } p_i\nmid  X_j^{(n)} \text{ if } j\notin A_i\big)
\\ \label{eq:atomiza para Aes}
&=
\P\big(\mathbf{M}^{(n)}=M\big)\longrightarrow
\prod_{i=1}^N \frac{1}{p_i^{h_i}}\, \Big(1-\frac{1}{p_i}\Big)^{r-h_i}\quad\text{as $n\to\infty$.}
\end{align}

b)  For $h_1,\dots, h_N$ fixed, consider the collection of matrices $\mathcal{M}$ with exactly $h_i$ ones in each row $i$, that is, $\mathcal{M}(h_1,\dots, h_N)=\{M: \sum_{j=1}^r m_{ij}=h_i, \text{ for }  i=1,\dots, N\}$.
Notice that there are $\prod_{i=1}^N \binom{r}{h_i}$ different matrices in $\mathcal{M}$.
Then, using \eqref{eq:atomiza para Aes}, and observing that $\{\mathbf{M}=M_1\}$ and $\{\mathbf{M}=M_2\}$ are disjoint for $M_1 \neq M_2$, we deduce
\begin{align}
\nonumber
\P\big(\mathbf{X}^{(n)}\colon &\, p_i \text{ divides exactly $h_i$ of the $X_j^{(n)}$}\big)=\P(\mathbf{M}^{(n)}\in \mathcal{M})=\sum_{M\in\mathcal{M}}\P(\mathbf{M}^{(n)}=M)
\\ \nonumber
&\longrightarrow \sum_{M\in\mathcal{M}}\prod_{i=1}^N \frac{1}{p_i^{h_i}}\, \Big(1-\frac{1}{p_i}\Big)^{r-h_i}
=\prod_{i=1}^N {r\choose {h_i}}\, \frac{1}{p_i^{h_i}}\, \Big(1-\frac{1}{p_i}\Big)^{r-h_i}
\\ \label{eq:atomiza para h}
&=\prod_{i=1}^N \P(\textsc{bin}(r,1/p_i)=h_i).
\end{align}

c) Finally,
\begin{lemma}\label{lemma:atomiza para <=h}
For $h_1,\dots, h_N$ fixed,
\begin{equation}
\label{eq:atomiza para <=h}
\lim_{n\to\infty}\ \P\big(\mathbf{X}^{(n)}\colon  p_i \text{ divides at most $h_i$ of the coordinates of $\mathbf{X}^{(n)}$}\big)
=\prod_{i=1}^N \P(\textsc{bin}(r,1/p_i)\le h_i).
\end{equation}
More generally, for $\boldsymbol\alpha\in[0,1]^r$,
\begin{align}
\label{eq:atomiza para <=h alpha}
\lim_{n\to\infty}\ \P\big(\mathbf{X}^{(n)}\le n\boldsymbol\alpha\colon  p_i &\,\text{ divides at most $h_i$ of the coordinates of $\mathbf{X}^{(n)}$}\big)
\\\nonumber
&=|\boldsymbol\alpha|\, \prod_{i=1}^N \P(\textsc{bin}(r,1/p_i)\le h_i).
\end{align}

\end{lemma}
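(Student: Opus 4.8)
The plan is to reduce \eqref{eq:atomiza para <=h} to the already-established limit \eqref{eq:atomiza para h} by a finite decomposition. First I would note that the number of coordinates of $\mathbf{X}^{(n)}$ divisible by a fixed prime $p_i$ is an integer between $0$ and $r$, so we may assume $0\le h_i\le r$ throughout: if some $h_i\ge r$ the corresponding constraint is vacuous and $\P(\textsc{bin}(r,1/p_i)\le h_i)=1$, so that prime and that factor can simply be discarded. Write $\mathcal{M}_{\le}=\mathcal{M}_{\le}(h_1,\dots,h_N)$ for the collection of $N\times r$ zero--one matrices $M$ with $\sum_{j=1}^r m_{ij}\le h_i$ for every $i=1,\dots,N$. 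Then the event that $p_i$ divides at most $h_i$ of the coordinates of $\mathbf{X}^{(n)}$, simultaneously for all $i$, is exactly $\{\mathbf{M}^{(n)}\in\mathcal{M}_{\le}\}$.

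Next I would partition $\mathcal{M}_{\le}$ according to the exact row sums, $\mathcal{M}_{\le}=\bigsqcup\mathcal{M}(h_1',\dots,h_N')$, the disjoint union over all tuples with $0\le h_i'\le h_i$, where $\mathcal{M}(h_1',\dots,h_N')$ is the family from part b) above. Since the events $\{\mathbf{M}^{(n)}=M_1\}$ and $\{\mathbf{M}^{(n)}=M_2\}$ are disjoint for $M_1\ne M_2$,
$$
\P\big(\mathbf{M}^{(n)}\in\mathcal{M}_{\le}\big)=\sum_{h_1'=0}^{h_1}\cdots\sum_{h_N'=0}^{h_N}\P\big(\mathbf{M}^{(n)}\in\mathcal{M}(h_1',\dots,h_N')\big).
$$
This is a sum of at most $\prod_{i=1}^N(h_i+1)$ terms, i.e. finitely many, so we may pass to the limit termwise. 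By \eqref{eq:atomiza para h} each term tends to $\prod_{i=1}^N\P(\textsc{bin}(r,1/p_i)=h_i')$, and therefore
$$
\lim_{n\to\infty}\P\big(\mathbf{M}^{(n)}\in\mathcal{M}_{\le}\big)=\sum_{h_1'=0}^{h_1}\cdots\sum_{h_N'=0}^{h_N}\ \prod_{i=1}^N\P\big(\textsc{bin}(r,1/p_i)=h_i'\big)=\prod_{i=1}^N\sum_{h_i'=0}^{h_i}\P\big(\textsc{bin}(r,1/p_i)=h_i'\big),
$$
the last equality being the distributive law (the summand factors over $i$ and the inner summation indices are uncoupled). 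The right-hand side equals $\prod_{i=1}^N\P(\textsc{bin}(r,1/p_i)\le h_i)$, which is \eqref{eq:atomiza para <=h}.

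For the refined statement \eqref{eq:atomiza para <=h alpha} I would run the identical decomposition but intersect everything with $\{\mathbf{X}^{(n)}\le n\boldsymbol\alpha\}$ and invoke the Corollary of Proof in place of Lemma~\ref{lemma:Cai-Bach atomizacion}: disjointness of $\{\mathbf{M}^{(n)}=M\}$ still gives
$$
\P\big(\mathbf{X}^{(n)}\le n\boldsymbol\alpha,\ \mathbf{M}^{(n)}\in\mathcal{M}_{\le}\big)=\sum_{h_1'=0}^{h_1}\cdots\sum_{h_N'=0}^{h_N}\ \sum_{M\in\mathcal{M}(h_1',\dots,h_N')}\P\big(\mathbf{X}^{(n)}\le n\boldsymbol\alpha,\ \mathbf{M}^{(n)}=M\big),
$$
and each of these finitely many innermost terms tends to $|\boldsymbol\alpha|\,\P(\mathbf{I}=M)$ by \eqref{eq:Cai-Bach atomizacion alpha}; factoring $|\boldsymbol\alpha|$ out of the finite sum and repeating the computation above gives the limit $|\boldsymbol\alpha|\prod_{i=1}^N\P(\textsc{bin}(r,1/p_i)\le h_i)$. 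There is no real obstacle beyond bookkeeping; the single point worth flagging is the reduction to $h_i\le r$, which makes the index set of the decomposition finite and hence licenses the interchange of limit and sum — the underlying quantities are not being controlled uniformly in $n$, so this finiteness is what the argument rests on.
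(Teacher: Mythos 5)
Your proposal is correct and follows essentially the same route as the paper: decompose the collection of admissible matrices as the disjoint union $\bigcup_{0\le k_i\le h_i}\mathcal{M}(k_1,\dots,k_N)$, pass to the limit in the resulting finite sum via \eqref{eq:atomiza para h} (respectively \eqref{eq:Cai-Bach atomizacion alpha} for the $\boldsymbol\alpha$ version), and factor the product over the uncoupled indices. Your extra remark about reducing to $h_i\le r$ is harmless but not needed, since the row sums are at most $r$ and the decomposition is finite in any case.
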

\begin{proof}
Just the argument for the case \eqref{eq:atomiza para <=h}. Consider the collection of matrices
$$
\mathcal{M}'(h_1,\dots, h_N)=\big\{M: {\textstyle\sum_{j=1}^r} m_{ij}\le h_i \text{ for } i=1,\dots, N\big\}=
\bigcup_{0\le k_i\le h_i} \mathcal{M}(k_1,\dots, k_N).
$$
Then, thanks to \eqref{eq:atomiza para h}, and observing that the above is a disjoint union,  we obtain
\begin{align*}
\P\big(\mathbf{X}^{(n)}\colon &\, p_i \text{ divides at most $h_i$ of the $X_j^{(n)}$}\big)=\P(\mathbf{M}^{(n)}\in \mathcal{M}')
\\
&=\sum_{0\le k_i\le h_i}\P\big(\mathbf{M}^{(n)}\in\mathcal{M}(k_1,\dots, k_N)\big)
\longrightarrow\sum_{0\le k_i\le h_i}\prod_{i=1}^N \P(\textsc{bin}(r,1/p_i)=k_i)
\\
&=\prod_{i=1}^N \P(\textsc{bin}(r,1/p_i)\le h_i).\qedhere
\end{align*}\end{proof}

\subsection{Proof of coprimality results}\label{subsection:primality results}
In this section we will prove the result \eqref{eq:proportion of kC} on the proportion of $r$-tuples that are $k$-wise coprime (obtaining \eqref{eq:proportion of C} and \eqref{eq:proportion of PC} as particular cases). We will follow the approach used by  Cai and Bach~\cite{CB2001} for the case of pairwise coprimality. See \cite{To2004} and \cite{Hu2} for alternative approaches.
The length of the sample, $r\ge 2$, will be fixed henceforth.

\smallskip

Fix $2\le k\le r$. For each prime $p$, define
$$
G_p^{(n)} =\{\mathbf{x}\le n: \text{$p$ divides at most $k-1$ of the $x_j$}\}.
$$

Observe that
$$
k\mathcal{C}^{(n)}:=\{\mathbf{x}\le n: \mathbf{x}\in k\mathcal{C}\}=\bigcap_p \,G_p^{(n)}.
$$
Using \eqref{eq:atomiza para <=h}, we get
$$
\frac{1}{n^r}\, \big|k\mathcal{C}^{(n)}\big|=\frac{1}{n^r}\, \Big|\bigcap_{j=1}^\infty \,G_{p_j}^{(n)}\Big|\le \frac{1}{n^r}\, \Big|\bigcap_{j=1}^N \,G_{p_j}^{(n)}\Big|\xrightarrow{n\to\infty} \prod_{j=1}^N \P(\textsc{bin}(r,1/p_j)\le k-1),
$$
so that
$$
\limsup_{n\to\infty}\frac{1}{n^r}\, \big|k\mathcal{C}^{(n)}\big|\le \prod_{j=1}^N \P(\textsc{bin}(r,1/p_j)\le k-1).
$$
Finally, as $N$ is arbitrary,
$$
\limsup_{n\to\infty}\frac{1}{n^r}\, \big|k\mathcal{C}^{(n)}\big|\le \prod_p \P(\textsc{bin}(r,1/p)\le k-1).
$$

Now, for fixed $N$, observe that
$$
\bigcap_{j=1}^N \, G_{p_j}^{(n)}\setminus \bigcap_{j=1}^\infty \, G_{p_j}^{(n)} \subset \bigcup_{j=N+1}^\infty\, B_{p_j}^{(n)}\,,
$$
where $B_{p}^{(n)}$ is the complementary of $G_{p}^{(n)}$ in $\{1,\dots,n\}^r$, that is, the set of $\mathbf{x} \le n$  such that~$p$ divides $k$ (or more) of the $x_j$. This means that
$$
\Big|\bigcap_{j=1}^N \, G_{p_j}^{(n)}\Big|-|k\mathcal{C}_r^{(n)}| \le \sum_{j=N+1}^\infty\, |B_{p_j}^{(n)}|\,.
$$
Now,
$$
B_p^{(n)} 
\subset \bigcup_{1\le i_1<\cdots <i_k\le r} \{\mathbf{x}\le n: \text{$p$ divides $x_{i_1},\dots, x_{i_k}$}\}.
$$
so that
$$
|B_p^{(n)}|\le {r\choose k} \#\{\mathbf{x}\le n : \text{$p$ divides $x_1,\dots, x_k$}\}={r\choose k} \, \Big\lfloor\frac{n}{p}\Big\rfloor^k\, n^{r-k}\le {r\choose k} \frac{n^r}{p^k}.
$$
Then, the tail is bounded by
$$
\sum_{j=N+1}^\infty\, |B_{p_j}^{(n)}|
\le {r\choose k} \, n^r\sum_{j=N+1}^\infty \frac{1}{p_j^k}\,.
$$
This yields
$$
\frac{1}{n^r}\, |k\mathcal{C}^{(n)}|\ge \frac{1}{n^r}
\Big|\bigcap_{j=1}^N \, G_{p_j}^{(n)}\Big|- {r\choose k} \, \sum_{j=N+1}^\infty \frac{1}{p_j^k}\,,
$$
and so, using again \eqref{eq:atomiza para <=h},
$$
\liminf_{n\to\infty}\, \frac{1}{n^r}\, |k\mathcal{C}^{(n)}|\ge \Big[\prod_{j=1}^N \P\big(\textsc{bin}(r,1/p_j)\le k-1\big)\Big]- {r\choose k} \, \sum_{j=N+1}^\infty \frac{1}{p_j^k}\,.
$$
We finish the proof by letting $N\to\infty$.\qed

\medskip

This proof gives directly an equidistributional result.
\begin{proposition}
The set of $k\mathcal{C}$-points in $\mathbb{N}^r$ is equidistributed.
\end{proposition}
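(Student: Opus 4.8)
The plan is to repeat, essentially verbatim, the two‑sided estimate just carried out for $\frac{1}{n^r}|k\mathcal{C}^{(n)}|$, but now counting the $k\mathcal{C}$‑points that lie in the box $\{\mathbf{x}\le n\boldsymbol\alpha\}$ for an arbitrary fixed $\boldsymbol\alpha\in[0,1]^r$, and to verify that the limit equals $|\boldsymbol\alpha|\prod_p\P(\textsc{bin}(r,1/p)\le k-1)$. By \eqref{eq:def_equidis_alt} this is precisely the statement that the set of $k\mathcal{C}$‑points is equidistributed, with density constant $D_{k\mathcal{C}}=\prod_p\P(\textsc{bin}(r,1/p)\le k-1)$.

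Concretely, set $k\mathcal{C}^{(n)}(\boldsymbol\alpha)=\{\mathbf{x}\le n\boldsymbol\alpha:\mathbf{x}\in k\mathcal{C}\}=\bigcap_p\big(G_p^{(n)}\cap\{\mathbf{x}\le n\boldsymbol\alpha\}\big)$. For the upper bound I would intersect only the first $N$ of the sets $G_{p_j}^{(n)}$ and apply the $\boldsymbol\alpha$‑parametrized form \eqref{eq:atomiza para <=h alpha} of Lemma~\ref{lemma:atomiza para <=h}, obtaining
\[ \limsup_{n\to\infty}\frac{1}{n^r}\big|k\mathcal{C}^{(n)}(\boldsymbol\alpha)\big|\le |\boldsymbol\alpha|\prod_{j=1}^N\P(\textsc{bin}(r,1/p_j)\le k-1), \]
and then letting $N\to\infty$ to bound the $\limsup$ by $|\boldsymbol\alpha|\,D_{k\mathcal{C}}$. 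For the lower bound the only ingredient to recheck is the tail estimate: letting $B_p^{(n)}(\boldsymbol\alpha)$ be the set of $\mathbf{x}\le n\boldsymbol\alpha$ for which $p$ divides at least $k$ of the coordinates, the same union bound as before gives
\[ |B_p^{(n)}(\boldsymbol\alpha)|\le \binom{r}{k}\prod_{i=1}^{k}\Big\lfloor\frac{n\alpha_i}{p}\Big\rfloor\prod_{i=k+1}^{r}\lfloor n\alpha_i\rfloor\le \binom{r}{k}\,|\boldsymbol\alpha|\,\frac{n^r}{p^k}\le \binom{r}{k}\,\frac{n^r}{p^k}, \]
so that $\sum_{j>N}|B_{p_j}^{(n)}(\boldsymbol\alpha)|\le \binom{r}{k}\,n^r\sum_{j>N}p_j^{-k}$, exactly the bound used in the density argument. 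Combining this with the containment $\big(\bigcap_{j\le N}G_{p_j}^{(n)}\big)\cap\{\mathbf{x}\le n\boldsymbol\alpha\}\setminus k\mathcal{C}^{(n)}(\boldsymbol\alpha)\subset\bigcup_{j>N}B_{p_j}^{(n)}(\boldsymbol\alpha)$ and invoking \eqref{eq:atomiza para <=h alpha} once more,
\[ \liminf_{n\to\infty}\frac{1}{n^r}\big|k\mathcal{C}^{(n)}(\boldsymbol\alpha)\big|\ge |\boldsymbol\alpha|\prod_{j=1}^N\P(\textsc{bin}(r,1/p_j)\le k-1)-\binom{r}{k}\sum_{j>N}\frac{1}{p_j^k}; \]
letting $N\to\infty$ matches the upper bound, and \eqref{eq:def_equidis_alt} follows.

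I do not expect any genuine obstacle here. Lemma~\ref{lemma:atomiza para <=h} was already stated in exactly the $\boldsymbol\alpha$‑parametrized form needed, and the tail term is $O(n^r/p^k)$ uniformly in $\boldsymbol\alpha$ precisely because $|\boldsymbol\alpha|\le 1$, so the whole argument transfers with no new estimate. The only point worth recording is that the convergence in \eqref{eq:def_equidis_alt} is in fact uniform in $\boldsymbol\alpha$, hence $\Delta_{k\mathcal{C}}(n)\to 0$; but, as remarked in the introduction, this uniformity is automatic for any equidistributed set and requires no additional work.
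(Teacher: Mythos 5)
Your argument is exactly the paper's proof: the authors also redefine $G_p^{(n)}$ and $B_p^{(n)}$ relative to the box $\{\mathbf{x}\le n\boldsymbol\alpha\}$, invoke the $\boldsymbol\alpha$-parametrized form \eqref{eq:atomiza para <=h alpha} of Lemma~\ref{lemma:atomiza para <=h}, and rerun the same tail estimate, which is uniform in $\boldsymbol\alpha$ since $|\boldsymbol\alpha|\le 1$. Your write-up just makes those steps explicit; no discrepancy.
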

\begin{proof}
For fixed $\boldsymbol\alpha\in[0,1]^r$ set
$$
G_p^{(n)} =\{\mathbf{x}\le n\boldsymbol\alpha: \text{$p$ divides at most $k-1$ of the $x_j$}\}
$$
and
$$
B_p^{(n)} =\{\mathbf{x}\le n\boldsymbol\alpha: \text{$p$ divides at least $k$ of the $x_j$}\},
$$
and proceed exactly as above to obtain
$$
\lim_{n\to\infty}\frac{1}{n^r}\,\{\mathbf{x}\le n\boldsymbol\alpha: \mathbf{x}\in k\mathcal{C}\}=|\boldsymbol\alpha|\, \prod_p \P(\textsc{bin}(r,1/p)\le k-1)\,,
$$
as desired.\end{proof}

\section{Special totient points}\label{section:coprimality plus}
\subsection{Coprimality with extra coprimality conditions}\label{subsection:coprimality plus coprimes}
Let $\mathbf{a}\in \mathcal{PC}$. We are interested in estimating the proportion of $r$-tuples $\mathbf{x}$ of integers that are (mutually or pairwise) coprime and such that, additionally, each coordinate $x_j$ is coprime with the corresponding $a_j$.

\smallskip
We introduce some notation. We say that an $r$-tuple of integers $\mathbf{x}=(x_1,\dots, x_r)$ belongs to $\mathcal{PC}_\mathbf{a}$ if $\mathbf{x}\in \mathcal{PC}$ and, additionally, $\gcd(a_i, x_i)=1$ for all $i=1,\dots, r$ (abbreviated, $\mathbf{x}\perp\mathbf{a}$). Analogously, we say that $\mathbf{x}=(x_1,\dots, x_r)\in \mathcal{C}_\mathbf{a}$ if $\mathbf{x}\in \mathcal{C}$ and $\mathbf{x}\perp\mathbf{a}$.


\begin{theorem}\label{thm:C and PC with extra coprimality}
Given $\mathbf{a}\in \mathcal{PC}$, we have:

\smallskip
{\rm a)} For pairwise coprimality,
\begin{equation}
\label{eq:PC with extra coprimality}
\lim_{n \to \infty}\P\big(\mathbf{X}^{(n)} \in\mathcal{PC}_\mathbf{a}\big)=T_r\, \frac{\Psi_{r-2}(|\mathbf{a}|)}{\Psi_{r-1}(|\mathbf{a}|)}.
\end{equation}
The function $\Psi_s$ was defined in \eqref{eq:Psi function}.

\smallskip
{\rm b)} For mutual coprimality,
\begin{equation}
\label{eq:C with extra coprimality}
\lim_{n \to \infty}\P\big(\mathbf{X}^{(n)} \in\mathcal{C}_\mathbf{a}\big)=\frac{1}{\zeta(r)}\, \frac{\varphi(|\mathbf{a}|)}{\varphi_{r}(|\mathbf{a}|)}\, |\mathbf{a}|^{r-1},
\end{equation}
where the Jordan function $\varphi_r$ was defined in \eqref{eq:Jordan function}.
\end{theorem}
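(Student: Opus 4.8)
The plan is to compute each limiting probability by the same sieve-type mechanism used in Section~\ref{section:proportion for coprimes}, but now splitting the primes into those dividing $|\mathbf{a}|$ and those not dividing it. Write $|\mathbf{a}|=\prod_p p^{v_p}$; since $\mathbf{a}\in\mathcal{PC}$, each prime $p$ dividing $|\mathbf{a}|$ divides exactly one coordinate $a_i$, so the condition $\gcd(a_i,x_i)=1$ for all $i$ is equivalent to: for every prime $p\mid |\mathbf{a}|$, $p$ does not divide the unique coordinate $x_i$ for which $p\mid a_i$. First I would set up, for a fixed cutoff $N$, the matrix picture of Lemma~\ref{lemma:Cai-Bach atomizacion}: for primes $p_i\nmid|\mathbf{a}|$ the admissible column patterns are exactly those counted in the $k\mathcal{C}$ proof (at most $k-1$ ones, with $k=2$ in part~a) and $k=r$ in part~b)), contributing the usual local factor; for primes $p_i\mid|\mathbf{a}|$ the admissible patterns are those with at most $k-1$ ones \emph{and} a forced $0$ in the distinguished column, which changes the local factor. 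Then I would pass to the limit using \eqref{eq:Cai-Bach atomizacion} and control the tail $\sum_{j>N}|B_{p_j}^{(n)}|$ exactly as in the $k\mathcal{C}$ argument (the tail bound is unaffected by the extra finitely-many constraints), so that the limiting probability is the convergent Euler product of the local factors.

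For part~a), pairwise coprimality means $k-1=1$: for $p\nmid|\mathbf{a}|$ the local factor is $(1-1/p)^r+\frac{r}{p}(1-1/p)^{r-1}$ as in \eqref{eq:proportion of PC}; for $p\mid|\mathbf{a}|$, with one column forced to $0$, the allowed patterns are ``all columns $0$'' or ``exactly one of the other $r-1$ columns is $1$'', giving local factor $(1-1/p)^r+\frac{r-1}{p}(1-1/p)^{r-1}=(1-1/p)^{r-1}\bigl(1-1/p+\frac{r-1}{p}\bigr)=(1-1/p)^{r-1}\bigl(1+\frac{r-2}{p}\bigr)$. Dividing this by the generic factor $(1-1/p)^{r-1}\bigl(1+\frac{r-1}{p}\bigr)$ gives the correction $\frac{1+(r-2)/p}{1+(r-1)/p}$ at each $p\mid|\mathbf{a}|$, and taking the product over such $p$ produces exactly $\Psi_{r-2}(|\mathbf{a}|)/\Psi_{r-1}(|\mathbf{a}|)$ by \eqref{eq:Psi function} (the powers $v_p$ are irrelevant since $\Psi_s(p^{v})/p^{v}=1+s/p$ depends only on $p$). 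This yields $T_r\,\Psi_{r-2}(|\mathbf{a}|)/\Psi_{r-1}(|\mathbf{a}|)$.

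For part~b), mutual coprimality is the condition ``not all $r$ coordinates divisible by $p$'', i.e.\ at most $r-1$ ones. For $p\nmid|\mathbf{a}|$ the local factor is $1-1/p^r$, giving $\prod_{p\nmid|\mathbf{a}|}(1-1/p^r)=\zeta(r)^{-1}\prod_{p\mid|\mathbf{a}|}(1-1/p^r)^{-1}$. For $p\mid|\mathbf{a}|$, one column is forced to $0$, so the constraint ``at most $r-1$ ones'' is automatic and the local factor is just the probability that the distinguished column is $0$, namely $1-1/p$. Assembling, the limit is $\zeta(r)^{-1}\prod_{p\mid|\mathbf{a}|}\frac{1-1/p}{1-1/p^r}$. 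To match the stated form, note $\varphi(|\mathbf{a}|)/|\mathbf{a}|=\prod_{p\mid|\mathbf{a}|}(1-1/p)$ and $\varphi_r(|\mathbf{a}|)/|\mathbf{a}|^r=\prod_{p\mid|\mathbf{a}|}(1-1/p^r)$, so $\prod_{p\mid|\mathbf{a}|}\frac{1-1/p}{1-1/p^r}=\frac{\varphi(|\mathbf{a}|)}{|\mathbf{a}|}\cdot\frac{|\mathbf{a}|^r}{\varphi_r(|\mathbf{a}|)}=\frac{\varphi(|\mathbf{a}|)}{\varphi_r(|\mathbf{a}|)}|\mathbf{a}|^{r-1}$, which is \eqref{eq:C with extra coprimality}.

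The only genuine obstacle is making the interchange of limits rigorous: one must justify that restricting to the first $N$ primes and letting $N\to\infty$ commutes with $n\to\infty$. This is handled exactly as in the $k\mathcal{C}$ proof by the uniform tail estimate $|B_p^{(n)}|\le\binom{r}{k}n^r/p^k$ (with $k=2$ or $k=r$), which gives a convergent bound $\binom{r}{k}\sum_{j>N}p_j^{-k}\to0$; the finitely many extra ``forbidden column'' constraints at primes dividing $|\mathbf{a}|$ only shrink the sets involved and do not affect this bound. Everything else is the bookkeeping of Euler products sketched above, and the identical argument with $\mathbf{X}^{(n)}\le n\boldsymbol\alpha$ (using the Corollary of Proof and \eqref{eq:atomiza para <=h alpha}) shows both sets are equidistributed, with the densities above as the constants $D_{\mathcal{S}}$.
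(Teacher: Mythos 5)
Your proposal is correct and follows essentially the same route as the paper: the Cai--Bach matrix/atomization lemma with the primes dividing $|\mathbf{a}|$ forcing a zero in the corresponding column, the identical local-factor computations $(1-1/p)\,\P(\textsc{bin}(r-1,1/p)\le 1)$ versus the generic factor in part a) and $1-1/p$ versus $1-1/p^r$ in part b), and the same convergent tail bound $|B_p^{(n)}|\le\binom{r}{k}n^r/p^k$ to justify the truncation at $N$ primes. The Euler-product bookkeeping matches the paper's, so nothing further is needed.
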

\begin{proof} Denote by $\mathcal{P}_1,\dots, \mathcal{P}_r$ the (disjoint) sets of primes dividing $a_1,\dots, a_r$, respectively (if some $a_i=1$, then we set $\mathcal{P}_i=\emptyset$). Write $\mathcal{P}=\cup_{j=1}^r \mathcal{P}_j$.

\smallskip

a) Fix $N$ large enough so that $\mathcal{P}\subset\mathcal{P}_N:=\{p_1,\dots, p_N\}$. Following the approach of Section \ref{subsection:atomic lemma}, we consider the matrices $M$ of dimensions $N\times r$  which fulfil the following requirements:
\begin{itemize}\itemsep=0pt
\item in each row there is at most one 1 (to ensure pairwise coprimality);
\item in a row labelled with a prime $p\in\mathcal{P}_i$ there is 0 in column $i$ (to ensure that $p\nmid X_i^{(n)})$.
\end{itemize}
As in Lemma \ref{lemma:atomiza para <=h}, we deduce
\begin{align*}
\limsup_{n\to\infty}\P\big(&\,\mathbf{X}^{(n)}\in\mathcal{PC}_\mathbf{a}\big)
\\
&\le \prod_{p\in\mathcal{P}_N\setminus \mathcal{P}} \!\!\!\P(\textsc{bin}(r,\tfrac{1}{p})\le 1)\cdot \prod_{j=1}^r \prod_{p\in \mathcal{P}_j} \Big(1-\frac{1}{p}\Big)\P(\textsc{bin}(r-1,\tfrac{1}{p})\le 1)
\\
&=\prod_{p\in\mathcal{P}_N} \P(\textsc{bin}(r,\tfrac{1}{p})\le 1)\cdot \prod_{j=1}^r \prod_{p\in \mathcal{P}_j} \Big(1-\frac{1}{p}\Big)\frac{\P(\textsc{bin}(r-1,\tfrac{1}{p})\le 1)}{\P(\textsc{bin}(r,\tfrac{1}{p})\le 1)}.
\end{align*}
Straightforward manipulations yield
$$
\prod_{p\in \mathcal{P}_1} \Big(1-\frac{1}{p}\Big)\frac{\P(\textsc{bin}(r-1,{1}/{p})\le 1)}{\P(\textsc{bin}(r,{1}/{p})\le 1)}=\prod_{p\in \mathcal{P}_1} \frac{1+(r-2)/{p}}{1+(r-1)/{p}}=\prod_{p\in\mathcal{P}_1} \frac{\Psi_{r-2}(p)}{\Psi_{r-1}(p)}=\frac{\Psi_{r-2}(a_1)}{\Psi_{r-1}(a_1)}\,.
$$
Now, as $\Psi$ is multiplicative, and $N$ is arbitrary, we get
$$
\limsup_{n\to\infty}\ \P\big(\mathbf{X}^{(n)}\in\mathcal{PC}_\mathbf{a}\big)\le T_r\, \frac{\Psi_{r-2}(|\mathbf{a}|)}{\Psi_{r-1}(|\mathbf{a}|)}.
$$

On the other direction, as $\mathcal{P}_N$ includes all the primes in $\mathcal{P}$ for $N$ large enough, the same argument used for the case of pairwise coprimality (with no extra conditions, see Section~\ref{subsection:primality results}) finishes the proof.

\smallskip
b) In this case, the matrices $M$ have at most $r-1$ ones in each row (to ensure mutual coprimality) and, again,
a 0 in the $i$-th column if the prime labeling the row belongs to $\mathcal{P}_i$. The product of probabilities to be considered is now
\begin{align*}
\prod_{p\in\mathcal{P}_N\setminus \mathcal{P}} &\,\P(\textsc{bin}(r,\tfrac{1}{p})\le r-1)\cdot \prod_{j=1}^r \prod_{p\in \mathcal{P}_j} \Big(1-\frac{1}{p}\Big)\P(\textsc{bin}(r-1,\tfrac{1}{p})\le r-1)
\\
& =\prod_{p\in\mathcal{P}_N} \Big(1-\frac{1}{p^r}\Big)\cdot \prod_{j=1}^r \prod_{p\in \mathcal{P}_j} \frac{(1-{1}/{p})}{(1-{1}/{p^r})}
=\frac{\varphi(|\mathbf{a}|)}{\varphi_{r}(|\mathbf{a}|)}\, (|\mathbf{a}|)^{r-1}\ \prod_{p\in\mathcal{P}_N} \Big(1-\frac{1}{p^r}\Big).
\end{align*}
The argument for the tail is analogous to that used in the proof of the case of mutual coprimality (with no extra conditions, see Section~\ref{subsection:primality results}).
\end{proof}

The equidistributional version of Theorem \ref{thm:C and PC with extra coprimality}, part a), reads as follows. It  will be useful elsewhere (see \cite{FF1}).

\begin{corollary}
\label{coro:PCE equidistributed} Fix $r\ge 2$. For $\mathbf{a}=(a_1,\dots, a_r)\in\mathcal{PC}$, the set $\mathcal{PC}_\mathbf{a}\in \mathbb{N}^r$ is equidistributed with constant
$$
T_r\, \frac{\Psi_{r-2}(|\mathbf{a}|)}{\Psi_{r-1}(|\mathbf{a}|)}.
$$
\end{corollary}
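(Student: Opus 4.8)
The plan is to upgrade the limit identity in Theorem~\ref{thm:C and PC with extra coprimality}(a) to the full equidistributional statement \eqref{eq:def_equidis_alt}, exactly as was done in Section~\ref{subsection:primality results} for the unrestricted $k\mathcal{C}$-case. First I would fix $\boldsymbol\alpha\in[0,1]^r$ and replace the cube $\{\mathbf{x}\le n\}$ by the box $\{\mathbf{x}\le n\boldsymbol\alpha\}$ throughout the proof of part~(a): redefine, for each prime $p$,
$$
G_p^{(n)}=\{\mathbf{x}\le n\boldsymbol\alpha:\ p \text{ divides at most one of the } x_j,\text{ and } p\nmid x_i \text{ whenever } p\in\mathcal{P}_i\},
$$
so that $\mathcal{PC}_{\mathbf{a}}\cap\{\mathbf{x}\le n\boldsymbol\alpha\}=\bigcap_p G_p^{(n)}$.

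The key point is that all the analytic inputs used in the proof have $\boldsymbol\alpha$-versions already available in the excerpt. The atomic estimate carries the factor $|\boldsymbol\alpha|$ by the Corollary of Proof of Lemma~\ref{lemma:Cai-Bach atomizacion} (equation \eqref{eq:Cai-Bach atomizacion alpha}), and its aggregated form \eqref{eq:atomiza para <=h alpha} gives, for any finite $N$ with $\mathcal{P}\subset\mathcal{P}_N$,
$$
\lim_{n\to\infty}\frac{1}{n^r}\Big|\bigcap_{j=1}^N G_{p_j}^{(n)}\Big|
=|\boldsymbol\alpha|\,\prod_{p\in\mathcal{P}_N}\P(\textsc{bin}(r,\tfrac1p)\le 1)\cdot\prod_{j=1}^r\prod_{p\in\mathcal{P}_j}\Big(1-\frac1p\Big)\,\frac{\P(\textsc{bin}(r-1,\tfrac1p)\le 1)}{\P(\textsc{bin}(r,\tfrac1p)\le 1)},
$$
which by the same multiplicative manipulation as in the theorem equals $|\boldsymbol\alpha|\,\big[\prod_{p\in\mathcal{P}_N}\P(\textsc{bin}(r,\tfrac1p)\le1)\big]\,\Psi_{r-2}(|\mathbf{a}|)/\Psi_{r-1}(|\mathbf{a}|)$. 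This handles both the $\limsup$ bound (drop some of the constraints) and, after the tail estimate, the $\liminf$ bound. For the tail, the set $B_p^{(n)}$ of $\mathbf{x}\le n\boldsymbol\alpha$ for which $p$ divides at least two coordinates satisfies $|B_p^{(n)}|\le\binom{r}{2}\lfloor n\alpha_{\max}/p\rfloor^2 n^{r-2}\le\binom{r}{2}n^r/p^2$ (or even $|\boldsymbol\alpha|$ times that), so $\sum_{j>N}|B_{p_j}^{(n)}|\le\binom{r}{2}n^r\sum_{j>N}p_j^{-2}\to 0$ as $N\to\infty$, uniformly in $n$ and in $\boldsymbol\alpha$. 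Combining, $\frac1{n^r}\#\{\mathbf{x}\le n\boldsymbol\alpha:\mathbf{x}\in\mathcal{PC}_{\mathbf{a}}\}\to|\boldsymbol\alpha|\,T_r\,\Psi_{r-2}(|\mathbf{a}|)/\Psi_{r-1}(|\mathbf{a}|)$, which is \eqref{eq:def_equidis_alt} with $D_{\mathcal{S}}=T_r\,\Psi_{r-2}(|\mathbf{a}|)/\Psi_{r-1}(|\mathbf{a}|)$, and hence $\mathcal{PC}_{\mathbf{a}}$ is equidistributed with that constant.

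There is essentially no hard part here: the corollary is a routine ``the proof, not just the statement, gives more'' observation, and the only mild care needed is to note that the tail bound is genuinely uniform in $\boldsymbol\alpha$ (so that the $\liminf/\limsup$ sandwich closes uniformly), which is immediate since $\lfloor n\alpha_i/p\rfloor\le n/p$ regardless of $\boldsymbol\alpha$. One could write the proof in one line by simply saying: repeat the proof of Theorem~\ref{thm:C and PC with extra coprimality}(a) verbatim, replacing every occurrence of $\{\mathbf{x}\le n\}$ by $\{\mathbf{x}\le n\boldsymbol\alpha\}$ and invoking \eqref{eq:atomiza para <=h alpha} in place of \eqref{eq:atomiza para <=h}, to obtain the limit $|\boldsymbol\alpha|\,T_r\,\Psi_{r-2}(|\mathbf{a}|)/\Psi_{r-1}(|\mathbf{a}|)$ for each $\boldsymbol\alpha\in[0,1]^r$.
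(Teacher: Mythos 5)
Your proposal is correct and is exactly the argument the paper intends: the corollary is stated without a separate proof because, as the paper notes, the Cai--Bach tail argument for Theorem~\ref{thm:C and PC with extra coprimality}(a) carries over verbatim once $\{\mathbf{x}\le n\}$ is replaced by $\{\mathbf{x}\le n\boldsymbol\alpha\}$ and \eqref{eq:Cai-Bach atomizacion alpha}/\eqref{eq:atomiza para <=h alpha} are invoked, just as in the Proposition on equidistribution of $k\mathcal{C}$-points. Your observation that the tail bound is uniform in $\boldsymbol\alpha$ is the one point worth making explicit, and you make it.
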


%

\smallskip

There is a corresponding version of Theorem \ref{eq:PC with extra coprimality} for  $k\mathcal{C}$-coprimality, in general, but the expressions of the asymptotic densities are a bit too cumbersome.

Next is a more general version of Theorem \ref{eq:PC with extra coprimality}, in which repeated $a_j$ are allowed.
\begin{theorem}\label{thm:C and PC with extra coprimality, repeated}
For $m\le r$, fix a $m$-tuple $\mathbf{a}\in \mathcal{PC}$. Let $\cup_{i=1}^k \mathcal{A}_i$ be a partition of $\{1,\dots,r\}$; write $b_i=|\mathcal{A}_i|$. Then,

\smallskip
{\rm a)} The asymptotic proportion of $r$-tuples $\mathbf{x}$ of integers  such that $\mathbf{x}\in \mathcal{PC}$ and, for $j=1,\dots,k$, $\gcd(a_j, x_i)=1$ if $k\in \mathcal{A}_i$, is given by
\begin{equation}
\label{eq:PC with extra coprimality, repeated}
T_r\, \frac{1}{\Psi_{r-1}(|\mathbf{a}|)}\prod_{i=1}^m {\Psi_{r-b_i-1}(a_i)}.
\end{equation}

{\rm b)} The asymptotic proportion of $r$-tuples $\mathbf{x}$ of integers  such that $\mathbf{x}\in \mathcal{C}$ and, for $j=1,\dots,k$, $\gcd(a_j, x_i)=1$ if $k\in \mathcal{A}_i$, is given by
\begin{equation}
\label{eq:C with extra coprimality, repeated}
\frac{1}{\zeta(r)}\, \frac{|\mathbf{a}|^r}{\varphi_{r}(|\mathbf{a}|)}\prod_{i=1}^m \Big(\frac{\varphi(a_j)}{a_j}\Big)^{b_i}.
\end{equation}
\end{theorem}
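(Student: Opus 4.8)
The plan is to follow the Cai--Bach atomic approach exactly as in the proof of Theorem \ref{thm:C and PC with extra coprimality}, adapting the bookkeeping to the presence of repeated reference integers. First I would record the combinatorial setup: since $\mathbf{a}=(a_1,\dots,a_m)\in\mathcal{PC}$, the sets $\mathcal{P}_1,\dots,\mathcal{P}_m$ of primes dividing $a_1,\dots,a_m$ are pairwise disjoint; set $\mathcal{P}=\bigcup_j\mathcal{P}_j$. The coprimality constraint ``$\gcd(a_j,x_i)=1$ whenever $i\in\mathcal{A}_j$'' translates, for a prime $p\in\mathcal{P}_j$, into the requirement that in the row of the divisibility matrix $M$ labelled by $p$ there are zeros in all the $b_j=|\mathcal{A}_j|$ columns indexed by $\mathcal{A}_j$. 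Fix $N$ large enough that $\mathcal{P}\subset\mathcal{P}_N=\{p_1,\dots,p_N\}$, and consider the matrices $M$ of size $N\times r$ with: at most one $1$ per row in part (a) (resp.\ at most $r-1$ ones per row in part (b)); and, for a row labelled by a prime in $\mathcal{P}_j$, forced zeros in the $b_j$ columns of $\mathcal{A}_j$. Applying Lemma \ref{lemma:atomiza para <=h} (in the form \eqref{eq:atomiza para <=h}, summing the atomic probabilities of \eqref{eq:Cai-Bach atomizacion} over the admissible $M$) gives a $\limsup$ bound, and the tail argument from Section \ref{subsection:primality results} gives the matching $\liminf$, since $\mathcal{P}_N$ eventually contains all primes of $\mathcal{P}$ and the constraint at primes outside $\mathcal{P}$ is just the plain $k$-coprimality constraint.

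The core computation is the local factor at a prime $p\in\mathcal{P}_j$. For part (a): among the $r$ coordinates, $b_j$ are forbidden to be divisible by $p$, and we need at most one of the remaining $r-b_j$ coordinates to be divisible by $p$. The limiting probability of the corresponding event is
\begin{equation*}
\Big(1-\frac1p\Big)^{b_j}\,\P\big(\textsc{bin}(r-b_j,\tfrac1p)\le 1\big)
=\Big(1-\frac1p\Big)^{b_j}\Big[\Big(1-\frac1p\Big)^{r-b_j}+\frac{r-b_j}{p}\Big(1-\frac1p\Big)^{r-b_j-1}\Big],
\end{equation*}
which, dividing by the unconstrained $k$-coprimality factor $\P(\textsc{bin}(r,\tfrac1p)\le 1)=(1-1/p)^r+\frac rp(1-1/p)^{r-1}=(1-1/p)^{r-1}(1+(r-1)/p)$ that is already built into $T_r$, contributes
\begin{equation*}
\frac{(1-1/p)^{b_j}(1-1/p)^{r-b_j-1}(1+(r-b_j-1)/p)}{(1-1/p)^{r-1}(1+(r-1)/p)}
=\frac{1+(r-b_j-1)/p}{1+(r-1)/p}=\frac{\Psi_{r-b_j-1}(p)}{\Psi_{r-1}(p)},
\end{equation*}
using $\Psi_s(p)=p(1+s/p)=p+s$. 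Multiplying over $p\in\mathcal{P}_j$ and over $j=1,\dots,m$ and invoking multiplicativity of $\Psi_s$ together with disjointness of the $\mathcal{P}_j$ yields $\prod_{j=1}^m\Psi_{r-b_j-1}(a_j)\big/\prod_{j=1}^m\Psi_{r-1}(a_j)=\big(\prod_{j=1}^m\Psi_{r-b_j-1}(a_j)\big)/\Psi_{r-1}(|\mathbf{a}|)$, which is exactly the correction factor in \eqref{eq:PC with extra coprimality, repeated}. For part (b) the admissible rows at $p\in\mathcal{P}_j$ have forced zeros in $b_j$ columns and no further restriction on the other $r-b_j$ (since ``at most $r-1$ ones'' is automatic once $b_j\ge 1$), so the local factor is simply $(1-1/p)^{b_j}$; dividing by the plain mutual-coprimality local factor $1-1/p^r$ built into $1/\zeta(r)$ gives $(1-1/p)^{b_j}/(1-1/p^r)$, and multiplying over $p\in\mathcal{P}_j$ and over $j$ produces $\prod_{j=1}^m(\varphi(a_j)/a_j)^{b_j}\cdot |\mathbf{a}|^r/\varphi_r(|\mathbf{a}|)$, matching \eqref{eq:C with extra coprimality, repeated} (here one rewrites $\prod_{p\mid a_j}(1-1/p)=\varphi(a_j)/a_j$ and $\prod_{p\mid|\mathbf{a}|}(1-1/p^r)=\varphi_r(|\mathbf{a}|)/|\mathbf{a}|^r$).

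The only genuinely delicate point is the tail estimate that upgrades the $\limsup$ to an equality: one must check that the bound $|B_p^{(n)}|\le\binom rk n^r/p^k$ of Section \ref{subsection:primality results}, used for primes $p=p_{N+1},p_{N+2},\dots$ outside $\mathcal{P}_N$, is unaffected by the extra coprimality conditions --- which it is, because those conditions only make the event smaller. Everything else is routine manipulation of the Euler factors; I would also remark (as in Corollary \ref{coro:PCE equidistributed}) that the same proof with $\mathbf{x}\le n\boldsymbol\alpha$ in place of $\mathbf{x}\le n$, via \eqref{eq:atomiza para <=h alpha}, gives the equidistributional versions, so Theorem \ref{thm:C and PC with extra coprimality, repeated} holds in the stronger equidistributed form with the stated densities. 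One minor caveat worth stating explicitly in the write-up: the indexing in the theorem statement should read ``$\gcd(a_j,x_i)=1$ if $i\in\mathcal{A}_j$'', and $b_i$ in \eqref{eq:PC with extra coprimality, repeated} means $b_j=|\mathcal{A}_j|$ attached to the $j$-th reference integer; I would align the notation before presenting the computation.
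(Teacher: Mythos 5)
Your proposal is correct and follows essentially the route the paper intends (the Cai--Bach atomic argument used for Theorem \ref{thm:C and PC with extra coprimality}, whose proof the paper does not repeat for this generalization): the local factors $(1-1/p)^{b_j}\,\P(\textsc{bin}(r-b_j,1/p)\le 1)$ at primes $p\mid a_j$, their reduction to $\Psi_{r-b_j-1}(a_j)/\Psi_{r-1}(a_j)$ and $(\varphi(a_j)/a_j)^{b_j}$, and the observation that the tail bound from Section \ref{subsection:primality results} is unaffected by the extra conditions are all right, and they recover the stated formulas (and the special cases $m=r$ and $m=1$). Your caveat that the statement's indexing should read $\gcd(a_j,x_i)=1$ for $i\in\mathcal{A}_j$ with the partition having $m$ (nonempty) parts is also a correct reading of the intended statement.
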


The case $m=r$ of Theorem \ref{thm:C and PC with extra coprimality, repeated} is Theorem \ref{thm:C and PC with extra coprimality}. The case $m=1$ corresponds to the case of pairwise (or mutual) coprime integers that are, additionally, prime with a fixed~$a$. Equation~\eqref{eq:PC with extra coprimality, repeated} reads, in this case,
$$
T_r\, \frac{\varphi(a)}{\Psi_{r-1}(a)}=T_r\, \prod_{p|a} \frac{1-1/p}{1-1/(r-1)/p}=
T_r\, \prod_{p|a} \Big(1-\frac{r}{p+r-1}\Big),
$$
as in Toth's \cite{To2004}, page 14.

\subsection{Coprimality with extra divisibility conditions}\label{subsection:coprimality plus divisibility}

We are now interested in estimating the proportion of $r$-tuples of integers that are pairwise (or mutually) coprime when restricting to the \textit{multiples} of certain fixed numbers, that is, to the lattice $a_1\mathbb{N}\oplus\cdots \oplus a_r\mathbb{N}$.

\smallskip
We say that an $r$-tuple of integers $\mathbf{x}=(x_1,\dots, x_r)$ belongs to $\mathcal{PC}^\mathbf{a}$ if $\mathbf{x}\in \mathcal{PC}$ and, additionally, $a_i| x_i$ for all $i=1,\dots, r$ (abbreviated, $\mathbf{a}|\mathbf{x}$). Analogously, we will say that $\mathbf{x}=(x_1,\dots, x_r)\in \mathcal{C}^\mathbf{a}$ if $\mathbf{x}\in \mathcal{C}$ and $\mathbf{a}|\mathbf{x}$.

\begin{theorem}\label{thm:C and PC with extra divisibility}
Given $\mathbf{a}\in \mathcal{PC}$, we have:

\smallskip
{\rm a)} For pairwise coprimality,
\begin{equation}
\label{eq:PC with extra divisibility}
\lim_{n \to \infty}\P\big( \mathbf{X}^{(n)}\in\mathcal{PC}^\mathbf{a}\big)=T_r\, \frac{1}{\Psi_{r-1}(|\mathbf{a}|)},
\end{equation}
The function $\Psi_s$ was defined in \eqref{eq:Psi function}.

\smallskip
{\rm b)} For mutual coprimality,
\begin{equation}
\label{eq:C with extra divisibility}
\lim_{n \to \infty}\P\big(\mathbf{X}^{(n)} \in\mathcal{C}^\mathbf{a}\big)=\frac{1}{\zeta(r)}\, \frac{\varphi_{r-1}(|\mathbf{a}|)}{\varphi_{r}(|\mathbf{a}|)},
\end{equation}
where the Jordan function $\varphi_r$ was defined in \eqref{eq:Jordan function}.
\end{theorem}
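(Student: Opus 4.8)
The plan is to handle the two parts by parallel but distinct devices: for the pairwise case I would run the Cai--Bach atomization of Sections~\ref{subsection:atomic lemma}--\ref{subsection:primality results} after factoring out the forced divisors, and for the mutual case a single M\"obius inversion is enough. The common first move for (a) is the substitution $x_i=a_iy_i$, a bijection between $\{\mathbf{x}\le n:\ a_i\mid x_i\ \text{for all }i\}$ and $\{\mathbf{y}:\ y_i\le\lfloor n/a_i\rfloor\ \text{for all }i\}$, which also makes the prime-power structure of the $a_i$ irrelevant (so no refinement of the $\{0,1\}$-matrices of Lemma~\ref{lemma:Cai-Bach atomizacion} is needed). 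Since $\mathbf{a}\in\mathcal{PC}$ and $\gcd(uv,w)=1\iff\gcd(u,w)=\gcd(v,w)=1$, one checks that $(a_1y_1,\dots,a_ry_r)\in\mathcal{PC}$ exactly when $\mathbf{y}\in\mathcal{PC}$ and $\gcd(a_i,y_j)=1$ for all $i\ne j$. Because $\prod_i\lfloor n/a_i\rfloor/n^{r}\to1/|\mathbf{a}|$, part (a) reduces to the density of $\{\mathbf{y}\in\mathcal{PC}:\ \gcd(a_i,y_j)=1\ \forall i\ne j\}$, with coordinates ranging over the boxes $\{1,\dots,\lfloor n/a_i\rfloor\}$; these mildly unequal sizes are immaterial (each still has $\lfloor\cdot/p\rfloor/\lfloor n/a_i\rfloor\to1/p$), and in any case a rectangular box is covered by the $\boldsymbol{\alpha}$-versions \eqref{eq:Cai-Bach atomizacion alpha} and \eqref{eq:atomiza para <=h alpha} of the atomic lemmas.

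For part (a) I would then repeat the argument proving Theorem~\ref{thm:C and PC with extra coprimality}, but with the admissible $N\times r$ matrices $M$ (take $N$ large enough that every prime dividing $|\mathbf{a}|$ occurs) now required to have at most one $1$ per row (for $\mathbf{y}\in\mathcal{PC}$) and, in any row labelled by a prime $p\mid a_{i_0}$, a $0$ in every column $j\ne i_0$ (for $\gcd(a_{i_0},y_j)=1$); the ``at most one $1$'' is then automatic on such a row. Summing the limiting atomic probabilities of Lemma~\ref{lemma:atomiza para <=h} over these matrices and bounding the tail over $p_{N+1},p_{N+2},\dots$ exactly as in Section~\ref{subsection:primality results} (with the analog of $|B_p^{(n)}|$ bounded by $\binom{r}{2}\,n^{r}/(|\mathbf{a}|\,p^{2})$) gives
\[
\lim_{n\to\infty}\P\big(\mathbf{X}^{(n)}\in\mathcal{PC}^{\mathbf{a}}\big)
=\frac1{|\mathbf{a}|}\,\prod_{p\,\nmid\,|\mathbf{a}|}\P\big(\textsc{bin}(r,\tfrac1p)\le1\big)\cdot\prod_{p\,\mid\,|\mathbf{a}|}\Big(1-\frac1p\Big)^{r-1}.
\]
Since $\prod_{p}\P(\textsc{bin}(r,1/p)\le1)=T_r$ and $\P(\textsc{bin}(r,1/p)\le1)=(1-1/p)^{r-1}(p+r-1)/p$, the right-hand side collapses to $\frac{T_r}{|\mathbf{a}|}\prod_{p\mid|\mathbf{a}|}\frac{p}{p+r-1}=\frac{T_r}{\Psi_{r-1}(|\mathbf{a}|)}$, which is \eqref{eq:PC with extra divisibility}.

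For part (b) I would skip the substitution and use \eqref{eq:use of mu} directly: since $d\mid\gcd(\mathbf{x})$ together with $\mathbf{a}\mid\mathbf{x}$ means $\lcm(a_i,d)\mid x_i$ for all $i$,
\[
\#\{\mathbf{x}\le n:\ \mathbf{x}\in\mathcal{C}^{\mathbf{a}}\}
=\sum_{d\ge1}\mu(d)\,\#\{\mathbf{x}\le n:\ \lcm(a_i,d)\mid x_i\ \forall i\}
=\sum_{d\ge1}\mu(d)\prod_{i=1}^{r}\Big\lfloor\frac{n}{\lcm(a_i,d)}\Big\rfloor .
\]
Dividing by $n^{r}$ and passing to the limit term by term, with the tail uniformly controlled by $\sum_{d>D}\prod_i\lcm(a_i,d)^{-1}\le\sum_{d>D}d^{-r}\to0$, and using that $\mathbf{a}\in\mathcal{PC}$ forces $\prod_i\lcm(a_i,d)=|\mathbf{a}|\,d^{r}/\gcd(d,|\mathbf{a}|)$, one obtains
\[
\lim_{n\to\infty}\P\big(\mathbf{X}^{(n)}\in\mathcal{C}^{\mathbf{a}}\big)
=\frac1{|\mathbf{a}|}\sum_{d\ge1}\frac{\mu(d)\gcd(d,|\mathbf{a}|)}{d^{r}}
=\frac1{|\mathbf{a}|}\prod_{p}\Big(1-\frac{\gcd(p,|\mathbf{a}|)}{p^{r}}\Big)
=\frac1{|\mathbf{a}|}\prod_{p\,\nmid\,|\mathbf{a}|}\Big(1-\frac1{p^{r}}\Big)\prod_{p\,\mid\,|\mathbf{a}|}\Big(1-\frac1{p^{r-1}}\Big),
\]
and recognizing $\varphi_{r-1}(|\mathbf{a}|)/\varphi_{r}(|\mathbf{a}|)=|\mathbf{a}|^{-1}\prod_{p\mid|\mathbf{a}|}(1-p^{1-r})/(1-p^{-r})$ turns this into $\frac1{\zeta(r)}\,\varphi_{r-1}(|\mathbf{a}|)/\varphi_{r}(|\mathbf{a}|)$, i.e.\ \eqref{eq:C with extra divisibility}. (Alternatively, run the Cai--Bach scheme on $\mathbf{y}=\mathbf{x}/\mathbf{a}$, with ``at most one $1$ per row'' replaced by ``not all $r$ entries equal $1$'', resp.\ ``not all entries in the columns $\ne i_0$ equal $1$''.) The equidistributional versions (in the spirit of Corollary~\ref{coro:PCE equidistributed}) come out of the same arguments by carrying a factor $n\boldsymbol{\alpha}$ throughout.

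I do not expect a genuinely hard step; the points needing care are the bookkeeping that routes the primes dividing $|\mathbf{a}|$ to the correct local factor (which is precisely where the corrections $\Psi_{r-1}$ and $\varphi_{r-1}/\varphi_{r}$ originate), and the uniform tail estimate legitimizing the exchange of the limit with the infinite product in (a) and with the infinite $\mu$-sum in (b) --- this last point being identical in spirit to the tail bound in Section~\ref{subsection:primality results} and to the classical proof of \eqref{eq:proportion of C}.
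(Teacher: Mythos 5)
Your proof is correct, but it departs from the paper's in instructive ways. The paper proves both parts by extending the Cai--Bach atomization directly to the event $\{\mathbf{a}\mid\mathbf{X}^{(n)}\}$: it keeps track of the exponent $\alpha$ of each prime $p\mid a_j$ and replaces the Bernoulli indicator $I_p$ by $I_{p^\alpha}$ in the relevant column, which contributes the factor $1/p^{\alpha}$ and hence $1/|\mathbf{a}|$ overall. Your part (a) instead changes variables $x_i=a_iy_i$ first; this extracts $1/|\mathbf{a}|$ as the volume of the image box and converts the problem into a pure coprimality statement about $\mathbf{y}$ (namely $\mathbf{y}\in\mathcal{PC}$ together with $\gcd(a_i,y_j)=1$ for $i\ne j$), so the unmodified $\{0,1\}$-matrix lemma applies and no prime-power refinement is needed --- a genuine simplification, at the mild cost of working on a box with unequal sides, which the $\boldsymbol{\alpha}$-versions \eqref{eq:Cai-Bach atomizacion alpha} and \eqref{eq:atomiza para <=h alpha} absorb. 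Your part (b) is a different proof altogether: a single M\"obius inversion over $d\mid\gcd(\mathbf{x})$, with the identity $\prod_i\lcm(a_i,d)=|\mathbf{a}|\,d^{r}/\gcd(d,|\mathbf{a}|)$ (valid precisely because $\mathbf{a}\in\mathcal{PC}$) producing the Euler product $\prod_p\big(1-\gcd(p,|\mathbf{a}|)p^{-r}\big)$ directly; this is more elementary and self-contained than the paper's probabilistic argument, and its tail $\sum_{d>D}d^{-r}$ is explicit and immediately quantifiable (it would even yield an $O(n^{-1}\ln n)$-type error term in the spirit of Section~\ref{section:equidistribution for C and PC}), whereas the paper's route has the advantage of uniformity with the rest of Section~\ref{section:coprimality plus} and of adapting without change to $k$-wise coprimality. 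All your final identifications with $\Psi_{r-1}$ and with $\varphi_{r-1}/\varphi_{r}$ check out.
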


\begin{proof}
a) We follow the notation of the previous section: for $\mathbf{a}\in \mathcal{PC}$, denote by $\mathcal{P}_1,\dots, \mathcal{P}_r$ the (disjoint) sets of primes dividing $a_1,\dots, a_r$, respectively (if some $a_i=1$, then we set $\mathcal{P}_i=\emptyset$). Write $\mathcal{P}=\cup_{j=1}^r \mathcal{P}_j$.

We need now to keep track of the exponent $\alpha$ of each prime $p$ appearing in the decomposition of the $a_j$.

Fix $N$ large enough so that $\mathcal{P}\subset\mathcal{P}_N:=\{p_1,\dots, p_N\}$.

Recall that we want to estimate the probability that $\big(X^{(n)}_1, \ldots, X^{(n)}_r\big) \in\mathcal{PC}$ and, additionally,  $a_i| X_i^{(n)}$ for all $i=1,\dots, r$.

Say that $p$ (with exponent $\alpha$) belongs to $\mathcal{P}_1$. We need that $p^\alpha|X_1^{(n)}$ (for the extra divisibility condition) and that $p$ divides, at most, one of the $X_j^{(n)}$ (for pairwise coprimality). This simplifies to $p^\alpha|X_1^{(n)}$ and $p\nmid X_j^{(n)}$ for $j\ne 1$. The random variable registering this situation may be written as
$$
I_{p^\alpha}(X_1^{(n)})\prod_{i\ne 1} (1-I_p(X_i^{(n)})),
$$
where $I_a(X_j)=1$ if $a|X_j$ and 0 otherwise. Putting all the primes together, we have to consider the random variable
$$
\prod_{p\in \mathcal{P}_N\setminus \mathcal{P}} \uno_{\{\sum_{i=1}^r I_p(X_i^{(n)})\le 1\}}\cdot  \prod_{j=1}^r \prod_{p\in\mathcal{P}_j} I_{p^\alpha}(X_j^{(n)})\prod_{i\ne j} (1-I_p(X_i^{(n)}))
$$
(the first product ensures pairwise coprimality for the primes not belonging to the $\mathcal{P}_j$). Now, adapting Lemma~\ref{lemma:Cai-Bach atomizacion} to this situation, we get that
the product of probabilities to be considered is
\begin{align*}
\prod_{p\in\mathcal{P}_N\setminus \mathcal{P}} &\,\P(\textsc{bin}(r,\tfrac{1}{p})\le 1)\cdot \prod_{j=1}^r \prod_{p\in \mathcal{P}_j} \frac{1}{p^\alpha}\, \Big(1-\frac{1}{p}\Big)^{r-1}
\\
&=
\frac{1}{|\mathbf{a}|}\, \prod_{p\in\mathcal{P}_N\setminus \mathcal{P}} \,\P(\textsc{bin}(r,\tfrac{1}{p})\le 1)\cdot \prod_{j=1}^r \prod_{p\in \mathcal{P}_j} \Big(1-\frac{1}{p}\Big)^{r-1}
\\
& =\frac{1}{|\mathbf{a}|}\, \prod_{p\in\mathcal{P}_N} \,\P(\textsc{bin}(r,\tfrac{1}{p})\le 1)\cdot \prod_{j=1}^r \prod_{p\in \mathcal{P}_j} \frac{(1-{1}/{p})^{r-1}}{(1-1/p)^{r}+r/p(1-1/p)^{r-1}}
\\
& =\frac{1}{|\mathbf{a}|}\, \prod_{p\in\mathcal{P}_N} \,\P(\textsc{bin}(r,\tfrac{1}{p})\le 1)\cdot \prod_{j=1}^r \prod_{p\in \mathcal{P}_j} \frac{1}{1+(r-1)/p}
\end{align*}

Recalling the definition \eqref{eq:Psi function} of the $\Psi$ function, and as $N$ is arbitrary, we get that
$$
\limsup_{n\to\infty}\ \P\big(\mathbf{X}^{(n)}\in\mathcal{PC}^\mathbf{a}\big) \le T_r\, \frac{1}{\Psi_{r-1}(a_1)\cdots \Psi_{r-1}(a_r)}.
$$
The usual argument with the tail finishes the proof of \eqref{eq:PC with extra divisibility}.

\smallskip
b) For mutual coprimality, the random variable of interest is
$$
\prod_{p\in \mathcal{P}_N\setminus \mathcal{P}} \uno_{\{\sum_{j=1}^r I_p(X_j^{(n)})\le r-1\}}\cdot  \prod_{j=1}^r \prod_{p\in\mathcal{P}_j} I_{p^\alpha}(X_j^{(n)})\, \uno_{\{\sum_{i\ne j} I_p(X_i^{(n)})\le r-2\}},
$$
and the calculation of probabilities goes like this:
\begin{align*}
&\prod_{p\in\mathcal{P}_N\setminus \mathcal{P}} \P(\textsc{bin}(r,\tfrac{1}{p})\le r-1)\cdot \prod_{j=1}^r \prod_{p\in \mathcal{P}_j} \frac{1}{p^\alpha}\, \P(\textsc{bin}(r-1,\tfrac{1}{p})\le r-2)
\\
&\quad=\frac{1}{|\mathbf{a}|}\,
\prod_{p\in\mathcal{P}_N\setminus \mathcal{P}} \,\Big(1-\frac{1}{p^r}\Big)\cdot \prod_{j=1}^r \prod_{p\in \mathcal{P}_j} \Big(1-\frac{1}{p^{r-1}}\Big)
\\
& \quad=\frac{1}{|\mathbf{a}|}\, \prod_{p\in\mathcal{P}_N} \,\Big(1-\frac{1}{p^r}\Big)\cdot \prod_{j=1}^r \prod_{p\in \mathcal{P}_j} \frac{(1-{1}/{p^{r-1}})}{1-1/p^r}
 =\frac{\varphi_{r-1}(|\mathbf{a}|)}{\varphi_{r}(|\mathbf{a}|)}\prod_{p\in\mathcal{P}_N} \,\Big(1-\frac{1}{p^r}\Big),
\end{align*}
using the definition \eqref{eq:Jordan function} of the Jordan function $\varphi_r$. The proof finishes as before.
\end{proof}

\subsection{Coprimality and arithmetic progressions}\label{subsection:coprimality and progressions}
As a natural extension of the previous result, we analyze the proportion of $r$-tuples of integers that are pairwise (or mutually) coprime when restricting to arithmetic progressions.

\smallskip
Again, fix $\mathbf{a}=(a_1,\dots, a_r)\in \mathcal{PC}$, and now add an $r$-tuple $\mathbf{b}=(b_1,\dots, b_r)$, where $0\le b_j\le a_j-1$ for each $j=1,\dots, r$. We want to estimate the proportion of $r$-tuples of integers that are (pairwise/mutually) coprime when each coordinate $x_j$ satisfies that $x_j\equiv b_j$ (mod $a_j$). The case $\mathbf{b}=\mathbf{0}$ is the one treated in the previous section.


\smallskip
We write $\mathbf{x}\in \mathcal{PC}^\mathbf{a, b}$ (or $\mathbf{x}\in \mathcal{C}^\mathbf{a, b}$) if $\mathbf{x}\in \mathcal{PC}$ (or $\mathbf{x}\in \mathcal{C}$) and, additionally, $a_i| x_i-b_i$ for all $i=1,\dots, r$ (abbreviated, $\mathbf{a}|\mathbf{x}-\mathbf{b}$).

\begin{theorem}\label{thm:C and PC on AP}
Given $\mathbf{a}\in \mathcal{PC}$ and $\mathbf{b}\in \mathbb{N}^r$, we have:

{\rm a)} For pairwise coprimality,
\begin{align}
\nonumber
\lim_{n \to \infty}\P\big(&\,\mathbf{X}^{(n)} \in\mathcal{PC}^\mathbf{a,b}\big)
\\
&=T_r\, \frac{\Psi_{r-2}(|\mathbf{a}|)}{\Psi_{r-1}(|\mathbf{a}|)}
\frac{1}{\varphi(|\mathbf{a}|)}\,
\prod_{i=1}^r
\frac{\varphi(\gcd(a_i,b_i))}
{\Psi_{r-2}(\gcd(a_i,b_i))}.
\label{eq:PC on AP}
\end{align}

\smallskip
{\rm b)} For mutual coprimality,
\begin{align}
\nonumber
\lim_{n \to \infty}\P\big(&\,\mathbf{X}^{(n)} \in\mathcal{C}^\mathbf{a,b}\big)
\\
&=\frac{1}{\zeta(r)}\ \frac{1}{|\mathbf{a}|}\ \frac{|\mathbf{a}|^r}{\varphi_{r}(|\mathbf{a}|)}\
\prod_{i=1}^r
\frac{\varphi_{r-1}(\gcd(a_i,b_i))}
{\gcd(a_i,b_i)^{r-1}}.
\label{eq:C on AP}
\end{align}
\end{theorem}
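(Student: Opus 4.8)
The plan is to run the Cai--Bach machinery of Section~\ref{subsection:atomic lemma} exactly as in the proof of Theorem~\ref{thm:C and PC with extra divisibility}, replacing the divisibility constraint $a_i\mid x_i$ by the congruence constraint $x_i\equiv b_i\pmod{a_i}$. Since $\mathbf{a}\in\mathcal{PC}$, each prime $p$ divides at most one $a_i$; if $p^\alpha\Vert a_i$, then $x_i\equiv b_i\pmod{a_i}$ imposes on the $p$-part only the condition $x_i\equiv b_i\pmod{p^\alpha}$, an event of asymptotic probability $1/p^\alpha$. The crucial observation is that this event is ``atomic'' with respect to the Bernoulli indicators $I_p(X_j^{(n)})$ of Lemma~\ref{lemma:Cai-Bach atomizacion}: writing $\gamma=v_p\big(\gcd(a_i,b_i)\big)=\min\big(\alpha,v_p(b_i)\big)$ for the $p$-adic valuation, the congruence forces $p\mid x_i$ when $\gamma\ge 1$ (i.e. when $p\mid b_i$) and forces $p\nmid x_i$ when $\gamma=0$. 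So the first step is to record the analogue of Lemma~\ref{lemma:Cai-Bach atomizacion} --- and of the corollary of its proof, for the equidistributional refinement with the factor $|\boldsymbol\alpha|$, cf.\ \eqref{eq:Cai-Bach atomizacion alpha} --- in which the rows labelled by primes $p\in\mathcal{P}_i$ carry, instead of a $0$ in column $i$, the indicator of $\{X_i^{(n)}\equiv b_i\ (\mathrm{mod}\ p^\alpha)\}$; this requires only the same inclusion--exclusion computation as in the proof of Lemma~\ref{lemma:Cai-Bach atomizacion}, with $\lfloor n/\widetilde s\rfloor$ replaced by $\#\{1\le k\le n:k\equiv b_i\ (\mathrm{mod}\ p^\alpha)\}=n/p^\alpha+O(1)$.

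Next I would isolate the local factor at each prime $p\mid a_i$, with $p^\alpha\Vert a_i$ and $p^\gamma\Vert\gcd(a_i,b_i)$, according to this dichotomy. If $\gamma\ge 1$, then $p\mid x_i$ is automatic, so for \emph{pairwise} coprimality we only require $p\nmid x_j$ for the remaining $r-1$ coordinates, giving the factor $\frac1{p^\alpha}(1-1/p)^{r-1}$; for \emph{mutual} coprimality we only require that not all of the remaining coordinates be divisible by $p$, giving $\frac1{p^\alpha}(1-1/p^{r-1})$. If $\gamma=0$, then $p\nmid x_i$ is automatic, so for pairwise coprimality the factor becomes $\frac1{p^\alpha}\big[(1-1/p)^{r-1}+(r-1)\tfrac1p(1-1/p)^{r-2}\big]$, while for mutual coprimality there is no extra constraint at all and the factor is simply $\frac1{p^\alpha}$. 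The primes $p\nmid|\mathbf{a}|$ contribute the usual $\P(\textsc{bin}(r,1/p)\le 1)$ (resp.\ $\P(\textsc{bin}(r,1/p)\le r-1)=1-1/p^r$), and the truncation-and-tail estimate that controls $\liminf$ and $\limsup$ is word-for-word the one used in Section~\ref{subsection:primality results}.

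Finally, it remains to reorganize the resulting absolutely convergent product into the stated closed forms. As $\Psi_s$, $\varphi$ and $\varphi_r$ are multiplicative and $\mathbf{a}\in\mathcal{PC}$, it suffices to check the identity one prime power $p^\alpha\Vert a_i$ at a time. For $\gamma\ge 1$ one finds
\[
\frac{(\text{local factor})_p}{\P(\textsc{bin}(r,1/p)\le 1)}=\frac{1}{p^\alpha\big(1+(r-1)/p\big)}=\frac{\Psi_{r-2}(p^\alpha)}{\Psi_{r-1}(p^\alpha)}\,\frac{1}{\varphi(p^\alpha)}\,\frac{\varphi(p^\gamma)}{\Psi_{r-2}(p^\gamma)},
\]
while for $\gamma=0$ the same quotient equals $\dfrac{1}{p^{\alpha-1}(p-1)}\cdot\dfrac{1+(r-2)/p}{1+(r-1)/p}=\dfrac{\Psi_{r-2}(p^\alpha)}{\Psi_{r-1}(p^\alpha)}\dfrac{1}{\varphi(p^\alpha)}$; multiplying over all $p$ and using $T_r=\prod_p\P(\textsc{bin}(r,1/p)\le 1)$ gives \eqref{eq:PC on AP}. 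Part b) is entirely analogous, now with $1/\zeta(r)=\prod_p(1-1/p^r)$, $\varphi_r(p^\alpha)=p^{\alpha r}(1-1/p^r)$ and $\varphi_{r-1}(p^\gamma)=p^{\gamma(r-1)}(1-1/p^{r-1})$ for $\gamma\ge1$ (the factor being trivial for $\gamma=0$), and yields \eqref{eq:C on AP}. A reassuring check: setting $\mathbf{b}=\mathbf{0}$ makes $\gamma=\alpha$ at every relevant prime, and both formulas collapse to Theorem~\ref{thm:C and PC with extra divisibility}. The only genuine subtlety is the $\gamma\ge 1$ versus $\gamma=0$ split, but it is automatically encoded in the factors $\varphi(\gcd(a_i,b_i))/\Psi_{r-2}(\gcd(a_i,b_i))$ and $\varphi_{r-1}(\gcd(a_i,b_i))/\gcd(a_i,b_i)^{r-1}$, since a prime $p\mid a_i$ with $\gamma=0$ does not divide $\gcd(a_i,b_i)$ and hence contributes trivially to those factors.
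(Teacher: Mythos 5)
Your proposal is correct and follows essentially the same route as the paper: the $\gamma\ge 1$ versus $\gamma=0$ dichotomy is exactly the paper's partition of the primes dividing $a_j$ into $\mathcal{Q}_j$ (those dividing $b_j$) and $\mathcal{R}_j$ (those not), and your local factors and tail argument coincide with the paper's. The only cosmetic difference is that you verify the closed-form identities one prime power at a time rather than by the paper's global product manipulation.
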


\begin{proof}
a) As usual, denote $\mathcal{P}_1,\dots, \mathcal{P}_r$ the disjoint sets of primes dividing $(a_1,\dots, a_r)\in \mathcal{PC}$, and write $\mathcal{P}=\cup_{j=1}^r \mathcal{P}_j$. We need again to keep track of the exponent $\alpha$ of each prime $p$ appearing in the decomposition of the $a_j$.

We partition each $\mathcal{P}_j$ into two subsets:
\begin{itemize}
\item $\mathcal{Q}_j$ contains the primes $p$ of $\mathcal{P}_j$ such that $p|b_j$. Observe that $p|\gcd(a_j,b_j)$.
\item $\mathcal{R}_j$ contains the primes $p$ of $\mathcal{P}_j$ such that $p\nmid b_j$.
\end{itemize}

Fix $N$ large enough so that $\mathcal{P}\subset\mathcal{P}_N:=\{p_1,\dots, p_N\}$. Recall that we want to estimate the probability that $\mathbf{X}^{(n)} \in\mathcal{PC}$ and, additionally,  $a_i| X_i^{(n)}-b_i$ for all $i=1,\dots, r$.

We will write the argument for the first coordinate. Say that $p$ (with exponent $\alpha$) belongs to $\mathcal{P}_1$. We need that $p^\alpha| X_1^{(n)}-b_1$ and that $p$ divides, at most, one of the $X_j^{(n)}$. If $p\in \mathcal{Q}_1$, then~$p| \gcd(a_1,b_1)$, and so $p$ divides $X_1^{(n)}$. On the other hand, if $p\in \mathcal{R}_1$, $p$ does not divide~$X_1^{(n)}$.
So the random variable registering the conditions for the first coordinate can be written as
$$
\prod_{p\in\mathcal{Q}_1} \, I_{p^\alpha}(X_1^{(n)}-b_1)\, \uno_{\{\sum_{i\ne 1} I_p(X_i^{(n)}=0\}}\cdot \prod_{p\in\mathcal{R}_1} \, I_{p^\alpha}(X_1^{(n)}-b_1)\, \uno_{\{\sum_{i\ne 1} I_p(X_i^{(n)}=1\}}.
$$
The corresponding product of probabilities will be
$$
\prod_{p\in\mathcal{Q}_1} \, \frac{1}{p^\alpha}\Big(1-\frac{1}{p}\Big)^{r-1} \cdot \prod_{p\in\mathcal{R}_1} \, \frac{1}{p^\alpha}\, \P(\textsc{bin}(r-1,\tfrac{1}{p})\le 1).
$$
Notice that the presence of $b_1$ does not change the probability $1/p^{\alpha}$.

Putting all the primes together we get
\begin{align*}
&\prod_{p\in\mathcal{P}_N\setminus \mathcal{P}} \P(\textsc{bin}(r,\tfrac{1}{p})\le 1)\cdot \prod_{j=1}^r \Big(\prod_{p\in\mathcal{Q}_j} \, \frac{1}{p^\alpha}\Big(1-\frac{1}{p}\Big)^{r-1} \prod_{p\in\mathcal{R}_j} \, \frac{1}{p^\alpha}\, \P(\textsc{bin}(r-1,\tfrac{1}{p})\le 1)\Big)
\\
&=
\frac{1}{|\mathbf{a}|}\, \prod_{p\in\mathcal{P}_N\setminus \mathcal{P}} \,\P(\textsc{bin}(r,\tfrac{1}{p})\le 1)\cdot \prod_{j=1}^r \Big(\prod_{p\in\mathcal{Q}_j} \, \Big(1-\frac{1}{p}\Big)^{r-1} \prod_{p\in\mathcal{R}_j}
\, \P(\textsc{bin}(r-1,\tfrac{1}{p})\le 1)\Big)
\\
& =\frac{1}{|\mathbf{a}|}\, \prod_{p\in\mathcal{P}_N} \,\P(\textsc{bin}(r,\tfrac{1}{p})\le 1)
\\
&\qquad\cdot \prod_{j=1}^r \Big(\prod_{p\in\mathcal{Q}_j} \, \frac{(1-\frac{1}{p})^{r-1}}{(1-\frac{1}{p})^r+\frac{r}{p}(1-\frac{1}{p})^{r-1}} \prod_{p\in\mathcal{R}_j}
\, \frac{(1-\frac{1}{p})^{r-1}+\frac{r-1}{p} (1-\frac{1}{p})^{r-2}}{(1-\frac{1}{p})^r+\frac{r}{p}(1-\frac{1}{p})^{r-1}}
\\
& =\frac{1}{|\mathbf{a}|}\, \prod_{p\in\mathcal{P}_N} \,\P(\textsc{bin}(r,\tfrac{1}{p})\le 1)
\cdot
\prod_{j=1}^r \Big(\prod_{p\in\mathcal{Q}_j} \, \frac{1}{1+\frac{r-1}{p}} \prod_{p\in\mathcal{R}_j}
\, \frac{1+\frac{r-2}{p}}{(1-\frac{1}{p})(1+\frac{r-1}{p})}\Big)
\\
& =\frac{1}{|\mathbf{a}|}\, \prod_{p\in\mathcal{P}_N} \,\P(\textsc{bin}(r,\tfrac{1}{p})\le 1)
\cdot
\prod_{j=1}^r \Big(\prod_{p\in\mathcal{P}_j} \, \frac{1}{1+\frac{r-1}{p}} \prod_{p\in\mathcal{R}_j}
\, \frac{1+\frac{r-2}{p}}{1-\frac{1}{p}}\Big)
\\
& =\frac{1}{\Psi_{r-1}(|\mathbf{a}|)}\, \prod_{p\in\mathcal{P}_N} \,\P(\textsc{bin}(r,\tfrac{1}{p})\le 1)
\cdot
\prod_{j=1}^r \Big(\prod_{p\in\mathcal{R}_j}
\, \frac{1+\frac{r-2}{p}}{1-\frac{1}{p}}\Big),
\end{align*}
where, in the last step, we have used the definition \eqref{eq:Psi function} of the $\Psi$ function. Now, on the one hand,
\begin{align*}
\prod_{p\in\mathcal{R}_j}
\, \Big(1+\frac{r-2}{p}\Big)&=
\prod_{p\in\mathcal{P}_j}
\, \Big(1+\frac{r-2}{p}\Big)\cdot \prod_{p\in\mathcal{R}_j}\frac{1}{1+\frac{r-2}{p}}=\frac{\Psi_{r-2}(a_j)}{a_j}\, \frac{\gcd(a_j,b_j)}{\Psi_{r-2}(\gcd(a_j,b_j))},
\end{align*}
recalling that if $p\in \mathcal{R}_j$ then $p|\gcd(a_j,b_j)$ and the definition of $\Psi$. On the other hand,
\begin{align*}
\prod_{p\in\mathcal{R}_j}
\, \frac{1}{1-1/p}&=
\prod_{p\in\mathcal{P}_j}
\, \frac{1}{1-1/p}\cdot \prod_{p\in\mathcal{R}_j}\Big(1-\frac{1}{p}\Big)=\frac{a_j}{\varphi(a_j)}\, \frac{\varphi(\gcd(a_j,b_j))}{\gcd(a_j,b_j)},
\end{align*}
We deduce \eqref{eq:PC on AP} with the usual arguments.

\medskip

b) It follows the same lines. Now the random variable for the first coordinate is
$$
\prod_{p\in\mathcal{Q}_1} \, I_{p^\alpha}(X_1^{(n)}-b_1)\, \uno_{\{\sum_{i\ne 1} I_p(X_i^{(n)}\le r-2\}}\cdot \prod_{p\in\mathcal{R}_1} \, I_{p^\alpha}(X_1^{(n)}-b_1)\, \uno_{\{\sum_{i\ne 1} I_p(X_i^{(n)}\le r-1\}},
$$
and the corresponding product of probabilities will be
\begin{align*}
\prod_{p\in\mathcal{Q}_1} \, \frac{1}{p^\alpha}\Big(1-\frac{1}{p^{r-1}}\Big) \cdot \prod_{p\in\mathcal{R}_1} \, \frac{1}{p^\alpha}
\end{align*}
All together, we get
\begin{align*}
\prod_{p\in\mathcal{P}_N\setminus \mathcal{P}} &\,\P(\textsc{bin}(r,\tfrac{1}{p})\le r-1)\cdot \prod_{j=1}^r \Big(\prod_{p\in\mathcal{Q}_j} \, \frac{1}{p^\alpha}\Big(1-\frac{1}{p^{r-1}}\Big) \cdot \prod_{p\in\mathcal{R}_j} \, \frac{1}{p^\alpha}\Big)
\\
&\quad=\frac{1}{|\mathbf{a}|}\,
\prod_{p\in\mathcal{P}_N\setminus \mathcal{P}} \,\Big(1-\frac{1}{p^r}\Big)\cdot \prod_{j=1}^r \prod_{p\in \mathcal{Q}_j} \Big(1-\frac{1}{p^{r-1}}\Big)
\\
& \quad=\frac{1}{|\mathbf{a}|}\, \prod_{p\in\mathcal{P}_N} \,\Big(1-\frac{1}{p^r}\Big)\cdot \prod_{j=1}^r \Big(\prod_{p\in \mathcal{Q}_j} \frac{(1-{1}/{p^{r-1}})}{1-1/p^r}
\prod_{p\in \mathcal{R}_j} \frac{1}{1-1/p^r}\Big).
\end{align*}
Finally observe that
\begin{align*}
\prod_{p\in \mathcal{Q}_j} \frac{(1-{1}/{p^{r-1}})}{1-1/p^r}
\prod_{p\in \mathcal{R}_j} \frac{1}{1-1/p^r}&=
\prod_{p\in \mathcal{P}_j} \frac{1}{1-1/p^r}
\prod_{p\in \mathcal{Q}_j} \Big(1-\frac{1}{p^{r-1}}\Big)
\\
&
=\frac{\varphi_r(a_j)}{a_j^r}\, \frac{\varphi_{r-1}(\gcd(a_j,b_j))}{\gcd(a_j,b_j)^{r-1}}.
\end{align*}
Once more, \eqref{eq:C on AP} is deduced from here.
\end{proof}

\section{Discrepancies for mutual and pairwise coprimality}\label{section:equidistribution for C and PC}
For the sets of points $\mathcal{C}$ and $\mathcal{PC}$ of $\mathbb{N}^r$ with mutually or pairwise coprime coordinates, there are precise estimates for the discrepancies.

\smallskip
For  $n \ge 1$ and $\boldsymbol{\alpha}\in [0,1]^r$ we write
\begin{equation*}
F(n,\boldsymbol{\alpha})=\#\{\mathbf{x}\le n{\boldsymbol{\alpha}}: \mathbf{x}\in \mathcal{C} \}
\quad \text{and} \quad
G(n,\boldsymbol{\alpha})=\#\{\mathbf{x}\le n{\boldsymbol{\alpha}}: \mathbf{x}\in \mathcal{PC} \}.
\end{equation*}
If $\boldsymbol{1}=(1,1,\ldots, 1)$, then the discrepancy functions of $\mathcal{C}$ and $\mathcal{PC}$, may be written as
\begin{equation*}
\Delta_{\mathcal{C}}(n)=\sup_{\boldsymbol{\alpha}\in[0,1]^r}\,\bigg|\frac{F(n,\boldsymbol{\alpha})}{F(n,\boldsymbol{1})}-|\boldsymbol{\alpha}|\bigg|\quad \text{and} \quad
\Delta_{\mathcal{PC}}(n)=\sup_{\boldsymbol{\alpha}\in[0,1]^r}\,\bigg|\frac{G(n,\boldsymbol{\alpha})}{G(n,\boldsymbol{1})}-|\boldsymbol{\alpha}|\bigg|
\end{equation*}

%
%
\subsection{Discrepancy for mutual coprimality}For mutual coprimality,  we have the following bounds on discrepancy:
\begin{theorem} \label{th:discrepancy for mutual}
For $r=2$, there are constants $0<c_2 <C_2$ such that $$\frac{c_2}{n}\le \Delta_{\mathcal{C}}(n)\le C_2\,\frac{\ln n}{n}\,.$$
For any $r \ge 3$, there are constants $0<c_r <C_r$ such that
$$ \frac{c_r}{n}\le \Delta_{\mathcal{C}}(n)\le \frac{C_r}{n}\,.$$
\end{theorem}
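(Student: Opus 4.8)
The plan is to work directly with the counting function $F(n,\boldsymbol{\alpha})$ and the Möbius identity \eqref{eq:use of mu}, which gives
\[
F(n,\boldsymbol{\alpha})=\sum_{k=1}^{n}\mu(k)\prod_{j=1}^r\Big\lfloor\frac{n\alpha_j}{k}\Big\rfloor.
\]
Writing $\lfloor n\alpha_j/k\rfloor=n\alpha_j/k+O(1)$ and expanding the product, the main term is $n^r|\boldsymbol{\alpha}|\sum_{k=1}^n\mu(k)/k^r$, which converges to $n^r|\boldsymbol{\alpha}|/\zeta(r)$ with tail error $O(n^r\sum_{k>n}k^{-r})$. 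The cross terms, where at least one factor is replaced by its $O(1)$ part, contribute at most $O\big(\sum_{k=1}^n\prod_{\text{some }j}(n\alpha_j/k)\big)$; the dominant such term has $r-1$ ``good'' factors and one $O(1)$ factor, giving $O\big(n^{r-1}\sum_{k=1}^n k^{-(r-1)}\big)$. For $r\ge 3$ this sum is $O(n^{r-1})$, so $F(n,\boldsymbol{\alpha})=n^r|\boldsymbol{\alpha}|/\zeta(r)+O(n^{r-1})$ uniformly in $\boldsymbol{\alpha}$; for $r=2$ the sum is $O(n\ln n)$, producing the extra logarithm. Taking $\boldsymbol{\alpha}=\boldsymbol{1}$ gives $F(n,\boldsymbol{1})=n^r/\zeta(r)+O(n^{r-1})$ (resp. $O(n\ln n)$ for $r=2$), and dividing yields
\[
\frac{F(n,\boldsymbol{\alpha})}{F(n,\boldsymbol{1})}-|\boldsymbol{\alpha}|=O(1/n)\quad(r\ge3),\qquad O(\ln n/n)\quad(r=2),
\]
uniformly in $\boldsymbol{\alpha}$, which is the claimed upper bound.

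For the lower bound I would exhibit a single value of $\boldsymbol{\alpha}$ (or a direction in which $n$ runs) forcing discrepancy of order $1/n$. The natural choice is $\boldsymbol{\alpha}$ with one coordinate slightly below a value making a floor jump — e.g. compare $\boldsymbol{\alpha}=(1,\dots,1)$ against $\boldsymbol{\alpha}'$ with $\alpha_1'=1-1/n$, or better, track the parity/discreteness of the count near a point where $\#\{\mathbf{x}\le n\boldsymbol{\alpha}\}$ changes by a block of size $\asymp n^{r-1}$ while $|\boldsymbol{\alpha}|$ changes by only $\asymp 1/n$. Concretely, since $F(n,\boldsymbol{\alpha})$ is a step function of $\boldsymbol{\alpha}$ with jumps of size at least the number of totient points on a coordinate hyperplane slice — which is $\asymp n^{r-1}/\zeta(r-1)$ or similar — there must exist $\boldsymbol{\alpha}$ at which $|F(n,\boldsymbol{\alpha})/F(n,\boldsymbol{1})-|\boldsymbol{\alpha}||\ge c/n$, because the continuous target $|\boldsymbol{\alpha}|$ cannot track a jump of relative size $\asymp 1/n$. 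Making this slice count precise (it is again a mutual-coprimality count in dimension $r$ with one coordinate pinned, handled by the same Möbius sum) gives the constant $c_r$.

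The main obstacle is the uniformity of the error term in $\boldsymbol{\alpha}$ in the upper bound: one must check that the $O(1)$ in $\lfloor n\alpha_j/k\rfloor = n\alpha_j/k + O(1)$ is genuinely absolute (it is, being bounded by $1$), and that the combinatorial expansion of $\prod_j(n\alpha_j/k+O(1))$ has all its error terms controlled by $\prod$ over proper subsets, each bounded using $\alpha_j\le 1$ — so no hidden dependence on small $\alpha_j$ creeps in. A secondary subtlety is that for $r=2$ the upper and lower bounds differ by a logarithmic factor, so one should not expect the lower bound argument to capture $\ln n/n$; the theorem only claims $c_2/n$ from below, consistent with the possibility that the truth is of order $1/n$ and the $\ln n$ is an artifact of the method. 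I would state the upper bound computation as a lemma isolating the estimate $F(n,\boldsymbol{\alpha})=n^r|\boldsymbol{\alpha}|/\zeta(r)+O(E_r(n))$ with $E_r(n)=n^{r-1}$ for $r\ge3$ and $E_2(n)=n\ln n$, and then both parts of the theorem follow formally.
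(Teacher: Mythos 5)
Your proposal is correct and follows essentially the same route as the paper: the same M\"obius expansion of $F(n,\boldsymbol{\alpha})$, the same $\lfloor x\rfloor=x+O(1)$ bookkeeping identifying the dominant error $n^{r-1}\sum_k k^{-(r-1)}$ (hence the extra $\ln n$ only when $r=2$), and a lower bound driven by the discreteness of the count. The paper simply takes the most degenerate instance of your jump argument --- $F(n,(a,1,\dots,1))=0$ for $0<a<1/n$, so the discrepancy at such $\boldsymbol{\alpha}$ is already $a\asymp 1/n$ --- which avoids having to estimate any slice count.
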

It would be interesting to determine whether the upper bound $\ln(n)/n$ in the case $r=2$ above could be improved or not.
\begin{proof}
We start with the case $r=2$. For  $0\le a,b\le 1$ we may write, thanks to \eqref{eq:use of mu},
\begin{equation}\label{eq:expression_for F}
F(n,(a,b))=\#\{x\le an, y\le bn : \gcd(x,y)=1\}=\sum_{d=1}^{\min(an,bn)} \mu(d)\, \Big\lfloor \frac{an}{d}\Big\rfloor\, \Big\lfloor \frac{bn}{d}\Big\rfloor\,.
\end{equation}
The lower bound of $\Delta_\mathcal{C}(n)$ follows simply by observing that for any  $a<1/n$, $F(n,(a,1))=0$.
For the upper bound, rewrite \eqref{eq:expression_for F},  using $\lfloor x\rfloor$ as $x-\{x\}$, to obtain
\begin{align*} \nonumber
F(n,(a,b))
&=n^2\, ab\, \sum_{d=1}^{\min(an,bn)} \frac{\mu(d)}{d^2}+n\sum_{d=1}^{\min(an,bn)} O(1/d)
\\
&=n^2\, ab\, \sum_{d=1}^{\min(an,bn)} \frac{\mu(d)}{d^2} + O(n\ln n)=n^2\, ab\, \frac{1}{\zeta(2)} + O(n\ln n),
\end{align*}
where we have used that $\sum_{d\ge 1} \mu(d)/d^2=1/\zeta(2)$ and that $\sum_{d\ge n+1} 1/d^2=O(1/n)$. Now,
$$
\frac{F(n,(a,b))}{F(n,(1,1))}=\frac{n^2\, ab/{\zeta(2)} + O(n\ln n)}{n^2/{\zeta(2)} + O(n\ln n)}=ab +O\Big(\frac{\ln n}{n}\Big),
$$
as desired.

\smallskip
For $r>2$, we would obtain similarly that
for any $\boldsymbol{\alpha} \in [0,1]^r$
$$
F(n, \boldsymbol{\alpha})=|\boldsymbol{\alpha}| n^r \, \frac{1}{\zeta(r)} +O(n^{r-1}),
$$
giving directly that
$$
\frac{F(n, \boldsymbol{\alpha})}{F(n, 1)}=|\boldsymbol{\alpha}|+O\Big(\frac{1}{n}\Big)\,.
$$
The lower bound follows from observing that $F(n,(a,1,\ldots,1))=0$ for $0<a<1/n$.
\end{proof}

\subsection{Discrepancy for pairwise coprimality}
For pairwise coprimality,  we have the following bounds on discrepancy:
\begin{theorem} \label{th:discrepancy for pairwise}For each $r\ge 2$ there are positive constants $0<c_r<C_r$ such that
\begin{equation}\label{eq:discrepancy for pairwise}
\frac{c_r}{n}\le \Delta_{\mathcal{PC}}(n)\le C_r\,\frac{\ln^{r-1}(n)}{n} .
\end{equation}
\end{theorem}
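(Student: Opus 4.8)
The plan is to obtain the upper bound by writing $G(n,\boldsymbol{\alpha})$ via an inclusion–exclusion over primes and controlling the error terms, and to obtain the lower bound by the same trivial observation used for $\mathcal{C}$: if $0<a<1/n$ then $G(n,(a,1,\dots,1))=0$, so $|G(n,(a,1,\dots,1))/G(n,\boldsymbol{1})-a|=a$, which can be taken comparable to $1/n$, giving $\Delta_{\mathcal{PC}}(n)\ge c_r/n$.

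For the upper bound, the starting point is an exact Möbius-type expansion of the counting function. Pairwise coprimality of $\mathbf{x}$ is the condition that for every prime $p$ there is at most one coordinate divisible by $p$; equivalently, $\prod_{i<j}\gcd(x_i,x_j)$ has no prime factor, i.e.\ $\gcd(x_i,x_j)=1$ for all $i<j$. Using the standard device $\sum_{d\mid \gcd(x_i,x_j)}\mu(d)=\uno_{\{\gcd(x_i,x_j)=1\}}$ simultaneously over all pairs, one writes
\begin{equation*}
G(n,\boldsymbol{\alpha})=\sum_{\mathbf{d}}\Big(\prod_{i<j}\mu(d_{ij})\Big)\,\#\{\mathbf{x}\le n\boldsymbol{\alpha}: d_{ij}\mid \gcd(x_i,x_j)\ \text{for all }i<j\},
\end{equation*}
where $\mathbf{d}=(d_{ij})_{i<j}$ runs over $\binom{r}{2}$-tuples of positive integers. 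For a fixed coordinate $x_i$, the constraints force $\mathrm{lcm}_{j\ne i}(d_{ij})\mid x_i$, so the inner count is $\prod_{i=1}^r\lfloor \alpha_i n/\mathrm{lcm}_{j\ne i}(d_{ij})\rfloor$. Replacing each floor by $\alpha_i n/\mathrm{lcm}_{j\ne i}(d_{ij})-\{\cdot\}$ and expanding, the main term is $|\boldsymbol{\alpha}|n^r$ times an absolutely convergent sum over $\mathbf{d}$ (this sum, taken with $\boldsymbol{\alpha}=\boldsymbol{1}$ up to infinity, is precisely the Euler product $T_r$), and each of the $2^r-1$ remaining terms loses at least one factor of $n$. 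The truncation of the $\mathbf{d}$-sum at $d_{ij}\le n$ contributes negligibly by the convergence just mentioned. The delicate point is that the number of $\mathbf{d}$-tuples producing the \emph{same} value of the vector $(\mathrm{lcm}_{j\ne i}(d_{ij}))_i$, or equivalently the density of the relevant divisor sums, forces a logarithmic loss per ``free'' prime: summing $1/d$ over $d\le n$ costs $\ln n$, and in the $r$-tuple setting the worst error term behaves like $n^{r-1}\ln^{r-1} n$ rather than $n^{r-1}$. Concretely, after dividing by $G(n,\boldsymbol{1})=T_r n^r+O(n^{r-1}\ln^{r-1}n)$, one gets $G(n,\boldsymbol{\alpha})/G(n,\boldsymbol{1})=|\boldsymbol{\alpha}|+O(\ln^{r-1}(n)/n)$ uniformly in $\boldsymbol{\alpha}$, which is \eqref{eq:discrepancy for pairwise}.

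The main obstacle is precisely this bookkeeping of the error term: one must show the error is $O(n^{r-1}\ln^{r-1}n)$ and not larger. The cleanest route is to organize the expansion coordinate by coordinate, writing $G(n,\boldsymbol{\alpha})=\sum_{e_1,\dots,e_r}\big(\text{a Möbius-type coefficient}\big)\prod_i\lfloor \alpha_i n/e_i\rfloor$, where $e_i=\mathrm{lcm}_{j\ne i}(d_{ij})$ and the coefficient $c(e_1,\dots,e_r)$ is multiplicative and supported on squarefree arguments with $|c|\le 1$; then bound $\sum_{e_i\le n}|c(\mathbf{e})|/ (e_1\cdots e_r)\cdot e_i$ (the cost of replacing one floor by its main part) by $\big(\sum_{e\le n}1/e\big)^{r-1}$ via the trivial estimate $|c(\mathbf{e})|\le \prod_i \uno_{\{e_i\text{ squarefree}\}}$, whence $O(\ln^{r-1}n)$. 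This reproduces, for $r=2$, the $\ln n/n$ bound of Theorem~\ref{th:discrepancy for mutual}, as it must since $\mathcal{C}=\mathcal{PC}$ there. A subsidiary check is that the same manipulation applied with $\boldsymbol{\alpha}=\boldsymbol{1}$ recovers the value $T_r$ of \eqref{eq:proportion of PC}, guaranteeing the main terms cancel correctly in the ratio.
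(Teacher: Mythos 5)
Your lower bound is the same trivial observation the paper uses for $\mathcal{C}$ (take $a<1/n$ so that $G(n,(a,1,\dots,1))=0$), and that part is fine. Your route to the upper bound, however, is genuinely different from the paper's: the paper does not expand over pairs at all, but follows Toth, proving by induction on $r$ (conditioning on the last coordinate, via $P_{r+1}^{(u)}(n_1,\dots,n_{r+1})=\sum_{t\le n_{r+1},\,\gcd(t,u)=1}P_r^{(tu)}(n_1,\dots,n_r)$) the refined asymptotic $P_r^{(u)}(n_1,\dots,n_r)=T_r f_r(u)\,n_1\cdots n_r+O(\theta(u)\,n^{r-1}\ln^{r-1}n)$ for pairwise coprime tuples that are in addition coprime to $u$, and then sets $u=1$. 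That induction produces the $r-1$ logarithms transparently, one per step, at the price of carrying the auxiliary parameter $u$ and the divisor factor $\theta(u)$.

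The step you yourself single out as ``the main obstacle'' is exactly where your argument does not close. First, the coefficient
$c(\mathbf{e})=\sum_{\mathbf{d}:\,\lcm_{j\ne i}(d_{ij})=e_i\ \forall i}\prod_{i<j}\mu(d_{ij})$
is \emph{not} bounded by $1$ in absolute value: it factors over primes, and the local factor attached to a prime whose edge set covers exactly three coordinates equals $3-1=2$ (the coverings of a triangle by edge subsets are the three $2$-subsets, each with sign $+1$, and the full triangle, with sign $-1$). Second, and more damagingly, the ``trivial estimate'' $|c(\mathbf{e})|\le\prod_i\uno_{\{e_i\text{ squarefree}\}}$ discards the only structural feature that makes the bound work: once the $i$-th floor is replaced by its main part there is no factor $1/e_i$ left, so treating the coordinates of $\mathbf{e}$ as independent squarefree integers yields $\big(\sum_{e\le n}1/e\big)^{r-1}\cdot\#\{e_i\le n\}\asymp n\ln^{r-1}n$, hence an error of order $n^{r}\ln^{r-1}n$, which is useless. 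The estimate you need is true, but for a different reason: $c(\mathbf{e})$ vanishes unless every prime dividing some $e_i$ also divides another $e_j$, and the Euler product of $\sum_{\mathbf{e}}|c(\mathbf{e})|\prod_{j\ne i}e_j^{-1}$ then has local factors $1+(r-1)p^{-1}+O(p^{-2})$ --- the $p^{-1}$ terms coming precisely from primes dividing a single $d_{ij}$ with $j\ne i$ and no other $d_{kl}$ --- so the product over $p\le n$ is $\asymp\ln^{r-1}n$ by Mertens. You would need to carry out this support/Euler-product analysis (and a similar one for the tail $\max d_{ij}>n$ of the main term) to substantiate the $O(n^{r-1}\ln^{r-1}n)$ error; as written, the bookkeeping is not established.
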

The proof of this theorem is based on the following extension of Toth's theorem in \cite{To2004}:
\begin{theorem}
Fix integers $r\ge 2$ and $u$, and an $r$-tuple $\mathbf{n}=(n_1,\dots, n_r)$, with $n_j\le n$ for $j=1\dots, r$. Denote
$$
P_r^{(u)}(n_1,\dots, n_r)=\{\mathbf{x}\le \mathbf{n}: \mathbf{x}\in \mathcal{PC}, \gcd(x_i,u)=1 \text{ for $i=1,\dots, r$}\}.
$$
Then
$$
P_r^{(u)}(n_1,\dots, n_r)=T_r\, f_r(u)\, (n_1\cdots n_r) +O(\theta(u)\, n^{r-1}\, \ln^{r-1}(n)),
$$
where $f_r(u)=\prod_{p|u} (1-\frac{r}{p+r-1})$ and $\theta(u)$ is the number of squarefree divisors of $u$.
\end{theorem}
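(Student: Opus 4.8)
The plan is to argue by induction on $r$, peeling off the last coordinate in the spirit of Toth's recursion in \cite{To2004}. The base case $r=1$ is immediate: $P_1^{(u)}(n_1)=\#\{x_1\le n_1:\gcd(x_1,u)=1\}=\frac{\varphi(u)}{u}\,n_1+O(\theta(u))$ by the elementary sieve estimate, which is exactly the asserted formula since $T_1=\prod_p\big((1-\tfrac1p)+\tfrac1p\big)=1$, $f_1(u)=\prod_{p\mid u}(1-\tfrac1p)=\varphi(u)/u$, and $n^0=\ln^0 n=1$. For the inductive step, fix $r\ge 2$ and observe that $\mathbf{x}\le\mathbf{n}$ lies in $\mathcal{PC}$ with every $x_i$ coprime to $u$ if and only if $\gcd(x_r,u)=1$ and $(x_1,\dots,x_{r-1})\in\mathcal{PC}$ with every $x_i$ $(i<r)$ coprime to $u\,x_r$; hence
\[
P_r^{(u)}(n_1,\dots,n_r)=\sum_{\substack{x_r\le n_r\\ \gcd(x_r,u)=1}}P_{r-1}^{(u x_r)}(n_1,\dots,n_{r-1}),
\]
into which I would substitute the inductive estimate for $P_{r-1}^{(u x_r)}$ term by term.

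The error produced this way is $O\big(n^{r-2}\ln^{r-2}n\,\sum_{x_r\le n_r}\theta(u x_r)\big)$; using $\theta(m m')\le\theta(m)\theta(m')$ and $\sum_{m\le x}\theta(m)=\sum_{d\le x}\mu^2(d)\lfloor x/d\rfloor=O(x\ln x)$, this is $O(\theta(u)\,n^{r-1}\ln^{r-1}n)$, the desired size. Since $\gcd(x_r,u)=1$ and $f_{r-1}$ is multiplicative, $f_{r-1}(u x_r)=f_{r-1}(u)f_{r-1}(x_r)$, so the main term reduces to
\[
T_{r-1}\,f_{r-1}(u)\,(n_1\cdots n_{r-1})\sum_{\substack{x_r\le n_r\\ \gcd(x_r,u)=1}}f_{r-1}(x_r),
\]
and everything now hinges on a mean-value estimate for the multiplicative function $x\mapsto f_{r-1}(x)\,\uno_{\{\gcd(x,u)=1\}}$.

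To this end I would write this function as a Dirichlet convolution $g_u*1$; then $g_u$ is supported on squarefree integers, with $g_u(p)=-1$ for $p\mid u$ and $g_u(p)=f_{r-1}(p)-1=-\tfrac{r-1}{p+r-2}$ for $p\nmid u$. Because $|g_u(p)|=1$ at precisely the primes dividing $u$, a short computation (splitting each squarefree $d$ into its part dividing $\mathrm{rad}(u)$ and its part coprime to $u$, and using $\sum_{m\le t}(r-1)^{\omega(m)}/m=O((\ln t)^{r-1})$) gives $\sum_{d\le t}|g_u(d)|=O(\theta(u)(\ln t)^{r-1})$ and $\sum_{d\ge 1}|g_u(d)|/d=O(\theta(u))$. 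The hyperbola identity $\sum_{x\le y,\,\gcd(x,u)=1}f_{r-1}(x)=\sum_{d\le y}g_u(d)\lfloor y/d\rfloor$, replacing $\lfloor y/d\rfloor$ by $y/d-\{y/d\}$ and estimating the tail $\sum_{d>y}g_u(d)/d$ by partial summation, then yields
\[
\sum_{\substack{x\le y\\ \gcd(x,u)=1}}f_{r-1}(x)=C_{r-1}(u)\,y+O\big(\theta(u)(\ln y)^{r-1}\big),\qquad C_{r-1}(u):=\prod_{p\mid u}\Big(1-\tfrac1p\Big)\prod_{p\nmid u}\Big(1-\tfrac{r-1}{p(p+r-2)}\Big).
\]
Substituting this back, and absorbing the secondary error $T_{r-1}f_{r-1}(u)(n_1\cdots n_{r-1})\cdot O(\theta(u)(\ln n)^{r-1})=O(\theta(u)n^{r-1}\ln^{r-1}n)$ into the main error, I get $P_r^{(u)}(\mathbf{n})=T_{r-1}f_{r-1}(u)C_{r-1}(u)\,(n_1\cdots n_r)+O(\theta(u)n^{r-1}\ln^{r-1}n)$. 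The last step is the Euler-product identity $T_{r-1}\,f_{r-1}(u)\,C_{r-1}(u)=T_r\,f_r(u)$: this is routine from $T_r=\prod_p(1-\tfrac1p)^{r-1}(1+\tfrac{r-1}{p})$, $f_r(u)=\prod_{p\mid u}\tfrac{p-1}{p+r-1}$ and $1-\tfrac{r-1}{p(p+r-2)}=\tfrac{(p-1)(p+r-1)}{p(p+r-2)}$, comparing the factors at $p\mid u$ and at $p\nmid u$ separately, both sides collapsing to $\prod_{p\mid u}(1-\tfrac1p)^r\cdot\prod_{p\nmid u}(1-\tfrac1p)^{r-1}(1+\tfrac{r-1}{p})$.

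I expect the main obstacle to be controlling the $u$-dependence of the error term through the recursion — in particular making sure the final factor is $\theta(u)$ and not something larger such as $\theta(u)^2$ or $r^{\omega(u)}$. This rests on the precise value $g_u(p)=-1$ (of modulus $1$) at the primes $p\mid u$, so that summing $|g_u|$ over the squarefree divisors of $u$ costs exactly $\theta(u)$; the remaining work — the sieve bounds, the partial summation, and the Euler-product identity — is routine.
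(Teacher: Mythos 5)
Your proposal is correct and follows essentially the same route as the paper: the identical recursion obtained by conditioning on the last coordinate, $P_r^{(u)}(\mathbf{n})=\sum_{x_r\le n_r,\ \gcd(x_r,u)=1}P_{r-1}^{(ux_r)}(n_1,\dots,n_{r-1})$, followed by induction in the manner of Toth's argument, which is all the paper itself says. The only difference is that you carry out the details the paper delegates to \cite{To2004} (the mean value of $f_{r-1}$ via the convolution $g_u*1$, the uniform $\theta(u)$ error control, and the Euler-product identity $T_{r-1}f_{r-1}(u)C_{r-1}(u)=T_rf_r(u)$), and these check out.
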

\begin{proof}
It is just a minor modification of the proof in \cite{To2004}. First observe that, by conditioning to the value of the last coordinate,
$$
P_{r+1}^{(u)}(n_1,\dots, n_r, n_{r+1})=\sum_{\substack{1\le t\le n_{r+1}\\ \gcd(t,u)=1}} P_{r}^{(tu)}(n_1,\dots, n_r).
$$
The claim follows by induction, as in \cite{To2004}.
\end{proof}
\begin{proof}[Proof of Theorem {\upshape\ref{th:discrepancy for pairwise}}] The case $u=1$ of the previous theorem gives
$$
G(n, \boldsymbol{\alpha})=|\boldsymbol{\alpha}|\, T_r\, n^r +O(n^{r-1}\, \ln^{r-1}(n)).
$$
Therefore,
$$
\frac{G(n, \boldsymbol{\alpha})}{G(n, \boldsymbol{1})}=|\boldsymbol{\alpha}| +O\Big(\frac{\ln^{r-1}(n))}{n}\Big)\, .
$$
\end{proof}

\subsection{Discrepancies for gcd and lcm}

Consider, for each $n \ge 1$, the measure $\mu_n$ in $[0,1]^2$
\begin{equation*}
\mu_n=\sum_{\substack{ 1\le x\le n,1 \le y\le n,\\ \gcd(x,y)=1}} \,\delta_{(x/n, y/n)}\,.
\end{equation*}
Equidistribution  of the set of totient points in $\mathbb{N}^2$ means that the normalized measure $\widetilde{\mu}_n=\mu_n/\mu_n([0,1]^2)$ converges to Lebesgue measure in $[0,1]^2$.

\medskip

Consider now the measure $\nu_n$ in $[0,1]^2$
\begin{equation}
\label{eq:measure with gcd}
\nu_n=\sum_{1\le x\le n,1 \le y\le n} \gcd(x,y) \,\delta_{(x/n, y/n)},
\end{equation}
which places mass $\gcd(x,y)$ at each point $(x/n,y/n)$.

\begin{proposition}
{\label{lemma:convergence measure gcd}
The probability measure $\widetilde{\nu}_n=\nu_n/\nu_n([0,1]^2)$ converges to  Lebesgue measure in $[0,1]^2$ as $n\to\infty$. In fact,
\begin{equation}
\label{eq:discrepancy for gcd}
\big|\widetilde{\nu}_n([0,a], [0,b])-ab\big|\le C \frac{1}{\ln n}.
\end{equation}
}
\end{proposition}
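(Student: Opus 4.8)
The plan is to reduce \eqref{eq:discrepancy for gcd} to a uniform asymptotic for the two-variable sum
$$S(X,Y):=\sum_{x\le X,\ y\le Y}\gcd(x,y),$$
since $\nu_n([0,a],[0,b])=S(an,bn)$. Using the identity $m=\sum_{d\mid m}\varphi(d)$ and interchanging the order of summation,
$$S(X,Y)=\sum_{d\ge 1}\varphi(d)\,\Big\lfloor\frac{X}{d}\Big\rfloor\Big\lfloor\frac{Y}{d}\Big\rfloor,$$
a sum effectively supported on $d\le D:=\min(X,Y)$. Replacing $\lfloor X/d\rfloor$ by $X/d+O(1)$ and expanding the product, the leading term is $XY\sum_{d\le D}\varphi(d)/d^2$, and the three error pieces are bounded respectively by $X\sum_{d\le D}\varphi(d)/d\le XD$, by $Y\sum_{d\le D}\varphi(d)/d\le YD$, and by $\sum_{d\le D}\varphi(d)\le D^2$; since $D\le X$ and $D\le Y$, all three are $O(XY)$.

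The next ingredient is the elementary estimate $\sum_{d\le D}\varphi(d)/d^2=\ln D/\zeta(2)+O(1)$, obtained from the classical $\sum_{d\le t}\varphi(d)/d=t/\zeta(2)+O(\ln t)$ by partial summation (or directly, from $\varphi(d)/d=\sum_{e\mid d}\mu(e)/e$, summing over $d=ef$ and using $\sum_{f\le D/e}1/f=\ln(D/e)+O(1)$ together with absolute convergence of $\sum\mu(e)/e^2$ and $\sum\mu(e)\ln e/e^2$). Hence $S(X,Y)=\frac{XY}{\zeta(2)}\ln D+O(XY)$ when $D\ge 1$, and $S(X,Y)=0$ when $D<1$. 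Setting $X=an$, $Y=bn$ and handling the degenerate range $\min(a,b)<1/n$ separately --- there $S=0$, while $ab\,n^2\bigl|\ln(n\min(a,b))\bigr|\le n\sup_{0<t\le 1}t|\ln t|=n/e$, using $ab\le\min(a,b)$ for $a,b\in[0,1]$ --- one obtains the uniform formula
$$\nu_n\bigl([0,a],[0,b]\bigr)=\frac{ab\,n^2}{\zeta(2)}\ln\bigl(n\min(a,b)\bigr)+O(n^2)\qquad(a,b\in[0,1]),$$
and, in particular, $\nu_n([0,1]^2)=\frac{n^2}{\zeta(2)}\ln n+O(n^2)$.

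It then remains to divide. Writing $\nu_n([0,a],[0,b])=\frac{n^2}{\zeta(2)}(ab\ln n+c_1)$ and $\nu_n([0,1]^2)=\frac{n^2}{\zeta(2)}(\ln n+c_2)$, the bound $ab\,|\ln\min(a,b)|\le\sup_{0<t\le 1}t|\ln t|=1/e$ shows that $|c_1|$ and $|c_2|$ are bounded by an absolute constant, uniformly in $(a,b)\in[0,1]^2$. Therefore
$$\widetilde\nu_n\bigl([0,a],[0,b]\bigr)-ab=\frac{c_1-ab\,c_2}{\ln n+c_2},$$
whose numerator is bounded and whose denominator exceeds $\tfrac12\ln n$ once $n$ is large; enlarging the constant to cover the finitely many small $n$ gives \eqref{eq:discrepancy for gcd}. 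In particular $\widetilde\nu_n$ converges weakly to Lebesgue measure on $[0,1]^2$, since for the limiting (continuous) distribution function, pointwise convergence of the rectangle masses is equivalent to weak convergence.

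The estimates here are routine; what requires care is uniformity in $(a,b)$. The leading term carries the factor $\ln(n\min(a,b))$, which degenerates precisely when one coordinate has size $\lesssim 1/n$ --- exactly where the sum defining $S$ becomes empty --- so one must check that the discarded leading term is then itself $O(n^2)$, i.e.\ swallowed by the error. Finally, the rate $1/\ln n$ is intrinsic to this approach: the total mass $\nu_n([0,1]^2)$ grows only like $n^2\ln n$, whereas replacing $\lfloor\cdot\rfloor$ by its argument already costs an error of size $n^2$.
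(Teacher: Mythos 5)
Your proof is correct, and it follows the same overall strategy as the paper's (expand $\gcd$ as a divisor sum, replace floors by their arguments, extract the leading term $n^2\ln n/\zeta(2)$, then divide), but via a slightly different decomposition. The paper conditions on the value $d=\gcd(x,y)$ and then applies the M\"obius identity \eqref{eq:use of mu}, arriving at the double sum $\sum_d d\sum_k\mu(k)\lfloor an/(dk)\rfloor\lfloor bn/(dk)\rfloor$ and using only the convergent series $\sum\mu(k)/k^2=1/\zeta(2)$; you instead use $m=\sum_{d\mid m}\varphi(d)$, which collapses that double sum into the single sum $\sum_d\varphi(d)\lfloor X/d\rfloor\lfloor Y/d\rfloor$ at the cost of needing the auxiliary asymptotic $\sum_{d\le D}\varphi(d)/d^2=\ln D/\zeta(2)+O(1)$ (which you justify correctly). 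The two routes are Dirichlet-convolution equivalents of one another and yield the same main term and the same $O(n^2)$ error. One genuine merit of your write-up is that you track uniformity in $(a,b)$ explicitly: you isolate the degenerate range $\min(a,b)<1/n$, where the sum is empty, and check that the discarded leading term $ab\,n^2\ln(n\min(a,b))$ is itself $O(n^2)$ via $\sup_{0<t\le1}t|\ln t|=1/e$ --- a point the paper's proof passes over silently even though the stated bound \eqref{eq:discrepancy for gcd} is uniform in the rectangle.
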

\begin{proof}
Just write, using again \eqref{eq:use of mu},
\begin{align*}
&{\nu}_n([0,a], [0,b])=\sum_{x\le an, y\le bn} \gcd(x,y)=\sum_{d=1}^{n\min(a,b)} d \cdot \#\{x\le an, y\le bn, \gcd(x,y)=d\}
\\
&\qquad=\sum_{d=1}^{n\min(a,b)} d \cdot \#\{x\le \tfrac{an}{d}, y\le \tfrac{bn}{d}, \gcd(x,y)=1\}
=\sum_{d=1}^{n\min(a,b)} d \sum_{k\ge 1} \mu(k)\Big\lfloor \frac{an}{dk}\Big\rfloor\Big\lfloor \frac{bn}{dk}\Big\rfloor.
\end{align*}
Now, using that $x=\lfloor x\rfloor -\{x\}$, we get
\begin{align*}
{\nu}_n([0,a], [0,b])&=
\sum_{d=1}^{n\min(a,b)} d \sum_{k= 1}^{\min(a,b) n/d}  \mu(k)\Big[\frac{abn^2}{d^2k^2}+O\Big(\frac{n}{dk}\Big)\Big]
\\
&=ab n^2 \sum_{d=1}^{n\min(a,b)} \frac{1}{d}\sum_{k=1}^{n\min(a,b)/d} \frac{\mu(k)}{k^2}+O\Big(n \sum_{d=1}^{n\min(a,b)} \sum_{k=1}^{n\min(a,b)/d} \frac{1}{k}\Big)
\\
&=ab n^2 \sum_{d=1}^{n\min(a,b)} \frac{1}{d}\sum_{k=1}^{n\min(a,b)/d} \frac{\mu(k)}{k^2}+O\Big(n \sum_{d=1}^{n\min(a,b)} \ln(n/d)\Big)
\\
&=ab\, \frac{1}{\zeta(2)} n^2 \ln(n)+ O(n^2).
\end{align*}
This yields \eqref{eq:discrepancy for gcd}.\end{proof}

Let us consider now the measure  $\eta_n$ in $[0,1]^2$ which places mass $\lcm(x,y)$ at each point $(x/n,y/n)$:
\begin{equation*}
\label{eq:measure with lcm}
\eta_n=\sum_{ x\le n,y\le n} \lcm(x,y) \delta_{(x/n, y/n)}.
\end{equation*}
 Recalling that $\lcm(x,y)=xy/\gcd(x,y)$ and following the lines of the argument of Proposition \ref{lemma:convergence measure gcd}, one can see that
$$
\eta_n([0,a], [0,b])=a^2b^2 n^4 \frac{\zeta(3)}{4\zeta(2)} +O(n^3\ln(n)).
$$
This yields:
\begin{proposition}
{\label{lemma:convergence measure lcm}
The probability measure $\widetilde{\eta}_n=\eta_n/\eta_n([0,1]^2)$ does not converge to the Lebesgue measure in $[0,1]^2$, but to the probability measure in $[0,1]^2$ with density $4ab$. Furthermore,
\begin{equation*}
\label{eq:discrepancy for lcm}
\big|\widetilde{\nu}_n([0,a], [0,b])-a^2b^2\big|\le C \,\frac{\ln n}{n}.
\end{equation*}
}
\end{proposition}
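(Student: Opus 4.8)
The plan is to parallel the computation in the proof of Proposition~\ref{lemma:convergence measure gcd}, using the identity $\lcm(x,y)=xy/\gcd(x,y)$ in place of $\gcd(x,y)$. First I would sort the sum defining $\eta_n([0,a],[0,b])$ according to the value $d=\gcd(x,y)$: writing $x=dx'$, $y=dy'$ with $\gcd(x',y')=1$, one has $\lcm(x,y)=d\,x'y'$, so that
$$
\eta_n([0,a],[0,b])=\sum_{d=1}^{n\min(a,b)} d \sum_{\substack{x'\le an/d,\ y'\le bn/d\\ \gcd(x',y')=1}} x'y'\,.
$$
Then I would strip the coprimality condition with the M\"obius inversion used in \eqref{eq:use of mu} (here with $F(\mathbf{x})=x_1x_2$), which turns the inner sum into $\sum_{k} \mu(k)\,k^2 \big(\sum_{m\le an/(dk)} m\big)\big(\sum_{m\le bn/(dk)} m\big)$, the sum over $k$ running up to $n\min(a,b)/d$.

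Next I would substitute the elementary estimate $\sum_{m\le M} m=M^2/2+O(M)$ into each of the two innermost sums and multiply out. Since $a,b\le 1$ and $dk\le n$ throughout the relevant range, the product equals $\frac{a^2b^2n^4}{4d^4k^4}+O\!\big(n^3/(d^3k^3)\big)$; multiplying by $\mu(k)k^2$ and summing over $k$, with $\sum_{k}\mu(k)/k^2=1/\zeta(2)+O(d/n)$ for the main term and $\sum_k 1/k=O(\ln n)$ for the error, the inner double sum becomes $\frac{a^2b^2n^4}{4\zeta(2)\,d^4}+O\!\big(n^3\ln(2n/d)/d^3\big)$. Summing over $d$, using $\sum_{d\ge1}1/d^3=\zeta(3)+O(1/n^2)$ and $\sum_{d\ge1}\ln(2n/d)/d^2=O(\ln n)$, I expect to arrive at
$$
\eta_n([0,a],[0,b])=a^2b^2\,n^4\,\frac{\zeta(3)}{4\zeta(2)}+O(n^3\ln n)\,,
$$
the stated asymptotic. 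Taking $a=b=1$ and dividing gives $\widetilde\eta_n([0,a],[0,b])=a^2b^2+O(\ln n/n)$, which is the displayed discrepancy estimate. Finally, since the limiting joint distribution function on $[0,1]^2$ is $(a,b)\mapsto a^2b^2$, which is continuous and has mixed partial derivative $4ab$ (with $\int_{[0,1]^2}4ab\,da\,db=1$), pointwise convergence at every $(a,b)$ yields that $\widetilde\eta_n$ converges weakly to the probability measure with density $4ab$, and this is not Lebesgue measure.

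The only delicate point is the bookkeeping of error terms through the two nested uses of the ``$\lfloor x\rfloor=x-\{x\}$'' device: one has to confirm that the cross terms arising from $(M^2/2+O(M))(M'^2/2+O(M'))$, the tail of the series $\sum\mu(k)/k^2$, and the tail of $\sum 1/d^3$ each contribute at most $O(n^3\ln n)$ after the summations over $k$ and $d$. The passage from pointwise convergence of the distribution functions to weak convergence of measures is routine because the limit function is continuous on the closed square.
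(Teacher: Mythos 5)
Your proposal is correct and follows exactly the route the paper intends: the paper's proof of this proposition consists only of the remark that one writes $\lcm(x,y)=xy/\gcd(x,y)$ and ``follows the lines'' of the gcd computation, and your write-up (decomposing by $d=\gcd(x,y)$, applying M\"obius inversion with $F(\mathbf{x})=x_1x_2$, and tracking the error terms to reach $\eta_n([0,a],[0,b])=a^2b^2n^4\zeta(3)/(4\zeta(2))+O(n^3\ln n)$) is precisely the omitted calculation. The error bookkeeping you flag as delicate does go through, since $dk\le n$ in the relevant range keeps all cross terms at $O(n^3/(d^3k^3))$.
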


\begin{remark}[Higher dimensions]
{\upshape For the gcd, the argument above can be readily extended to higher dimensions, giving that the measure defined placing masses $\gcd(\mathbf{x})$ at points $\mathbf{x}/n$ (where $\mathbf{x}=(x_1,\dots, x_r)$) converges to the Lebesgue measure in $[0,1]^r$. For the lcm, on the other hand, we do not have any results  about the limiting behaviour of the sequence of probability measures
$$
\widetilde{\eta}_n=\frac{\eta_n}{\eta_n([0,1]^r)}, \quad\text{with}\quad
\eta_n=\sum_{ \mathbf{x}\le n} \lcm(\mathbf{x}) \, \delta_{(\mathbf{x}/n)},
$$
except for the case $r=3$, which would be too cumbersome to state here. See \cite{ED2004} and \cite{FF1}.
}
\end{remark}

\

\noindent\textsc{Jos\'{e} L. Fern\'{a}ndez:} Departamento de Matem\'{a}ticas, Universidad Aut\'{o}noma de Madrid, 28049-Madrid, Spain.
\texttt{joseluis.fernandez@uam.es}.

\smallskip

\noindent\textsc{Pablo Fern\'{a}ndez:} Departamento de Matem\'{a}ticas, Universidad Aut\'{o}noma de Madrid, 28049-Madrid, Spain.
\texttt{pablo.fernandez@uam.es}.

 \renewcommand{\thefootnote}{\fnsymbol{footnote}}
\footnotetext{The research of both authors is partially supported by Fundaci\'{o}n Akusmatika. The second named author is partially supported by the Spanish Ministerio de Ciencia e Innovaci\'{o}n, project no. MTM2011-22851.}
\renewcommand{\thefootnote}{\arabic{footnote}}


\begin{thebibliography}{99}

\parskip=0.7pt

\small


\bibitem{CB2001} \textsc{Cai, J.-Y. and Bach, E.}: On testing for zero polynomials by a set of points with bounded precision. In \textit{COCOON 2001}, 473--482. Lect. Notes Comput Sci. 2108, Springer Verlag, 2001.

\bibitem{Ce3} \textsc{Ces\`{a}ro, E.}:  Sur le plus grand commun diviseur de plusieurs nombres. \textit{Annali di Matematica Pura ed Applicata} {\bf 13} (1885), 291--294.

\bibitem{Ch} \textsc{Christopher, J.}: The asymptotic density of some $k$ dimensional sets. \textit{Amer. Math. Monthly} {\bf 63} (1956), no. 6, 399--401.

\bibitem{ED2004} \textsc{Erd\"{o}s, P. and Diaconis, P.}: On the distribution of the greatest common divisor. 
In \textit{A Festschrift for Herman Rubin}, 56--61. Lecture Notes, Monograph Series, vol. 45, Institute of Mathematical Statistics, 2004. (Reprint of the original Technical Report no. 12, Stanford University, 1977).


\bibitem{FF1} \textsc{Fern\'andez, J.\,L. and Fern\'andez, P.}: On the probability distribution of $\gcd$ and $\text{\rm lcm}$ of $r$-tuples of integers. Preprint, \texttt{arXiv:1305.0536\,[math.NT]}, May 2013.




\bibitem{HW} \textsc{Hardy, G.\,H. and Wright, E.\,M.}: \textit{An introduction to the Theory of Numbers}. Oxford Science Publications, Oxford, 1979.

\bibitem{HS} \textsc{Herzog, F. and Stewart, B.}: Patterns of visible and non visible lattices. \textit{Amer. Math. Monthly} {\bf 78} (1971), 487--496.

\bibitem{Hu2} \textsc{Hu, J.:} The probability that random positive integers are $k$-wise relatively prime. Preprint, \texttt{arXiv:1208.1537v1\,[math.NT]}, Aug 2012.


\bibitem{Lehmer-son} \textsc{Lehmer, D.\,H.}: A conjecture of Krishnaswami. \textit{Bull. Amer. Math. Soc.} {\bf 54} (1948), no. 12, 1185--1190.

\bibitem{Lehmer} \textsc{Lehmer, D.\,N.}: Asymptotic evaluation of certain totient-sums. \textit{Amer. J. Math.} {\bf 22} (1900), no. 4, 293--335.

\bibitem{Ny} \textsc{Nymann, J.\,E.}: On the probability that $k$ positive integers are relatively prime. \textit{J. Number Th.} {\bf  4} (1972)  469-473.

\bibitem{To2004} \textsc{Toth, L.}: The probability that $k$ positive integers are pairwise relatively prime. \textit{Fibonacci Quart.} {\bf40} (2002), 13--18.

\end{thebibliography}
\end{document}